\newtheorem{theorem}{Theorem}
\newtheorem{lemma}{Lemma}
\newtheorem{proposition}{Proposition}
\newtheorem{corollary}{Corollary}
\newtheoremstyle{exStyle}
   {5pt}
   {3pt}
   {}
   {}
  {\bfseries}
   {.}
   {.5em}
   {}
\theoremstyle{exStyle}
\newtheorem*{example}{Example}
\renewcommand{\P}{\textbf{P}}
\newcommand{\R}{\mathbb{R}}
\newcommand{\Rd}{\mathbb{R}^{d}}
\newcommand{\Rone}{\mathbb{R}}
\newcommand{\g}{G(k,d)}
\newcommand{\E}{E}
\newcommand{\var}{\text{Var}}
\newcommand{\orig}{\mathbf{o}}
\renewcommand{\Pr}{\pi}
\newcommand{\ind}{\mathbf{1}}
\renewcommand{\d}{\textup{d}}
\newcommand{\e}{\textup{e}}
\begin{document}

\title{Anisotropic Poisson Processes of Cylinders
}


\author{Malte Spiess$^1$
         \and
        Evgeny Spodarev$^2$
}

\maketitle

\begin{abstract}
  Main characteristics of stationary anisotropic Poisson processes of cylinders
  (dilated $k$-dimensional flats) in $d$-dimensional Euclidean space are
  studied. Explicit formulae for the capacity functional, the covariance
  function, the contact distribution function, the volume fraction, and the
  intensity of the surface area measure are given which can be used directly in
  applications.%
\end{abstract}

\noindent
\textbf{Keywords} {porous media, fiber process, anisotropy, intrinsic volumes,
    stochastic geometry}
\\[1ex]
\textbf{Mathematics Subject Classification (2000)} {60D05 \and 60G10}

\footnotetext[1]{
Institute of Stochastics, Ulm University, Helmholtzstr.  18, D--89069 Ulm, Germany \\
              Tel.: +49-731-5023528\\
              Fax: +49-731-5023649\\
              malte.spiess\@uni-ulm.de
}
\footnotetext[2]{
Institute of Stochastics, Ulm University, Helmholtzstr.  18, D--89069 Ulm, Germany \\
              Tel.: +49-731-5023530\\
              Fax: +49-731-5023649\\
              evgeny.spodarev\@uni-ulm.de
}
\section{Introduction}
\label{intro}

Porous fiber materials find vast applications in modern material technologies.
Their use ranges from light polymer-based non-woven materials, see
\cite{hbso03}, to fiber-reinforced textile and fuel cells as in \cite{MukWan06}.
Their porosity, percolation, acoustic absorption and liquid permeability are of
special interest. It is known that these properties depend to a great
extent on the microscopic structure of fibers, in particular, on the
orientation of a typical fiber. If all directions of fibers are equiprobable one
speaks of \textit{isotropy}. Many materials are made by pressing an isotropic
collection of fibers together thus producing strongly \textit{anisotropic}
structures. As examples, pressed non-woven materials used as an acoustic trim in
car production, see \cite{sprwo06}, paper making process as in \cite{CorKal61},
and \cite{mo:st93}, and gas diffusion layers of fuel cells, here
\cite{maroflle}, \cite{mhglkhhb} can be mentioned; see Figure~\ref{figToray}.
\begin{figure}
  \begin{center} {\includegraphics[scale=0.8]{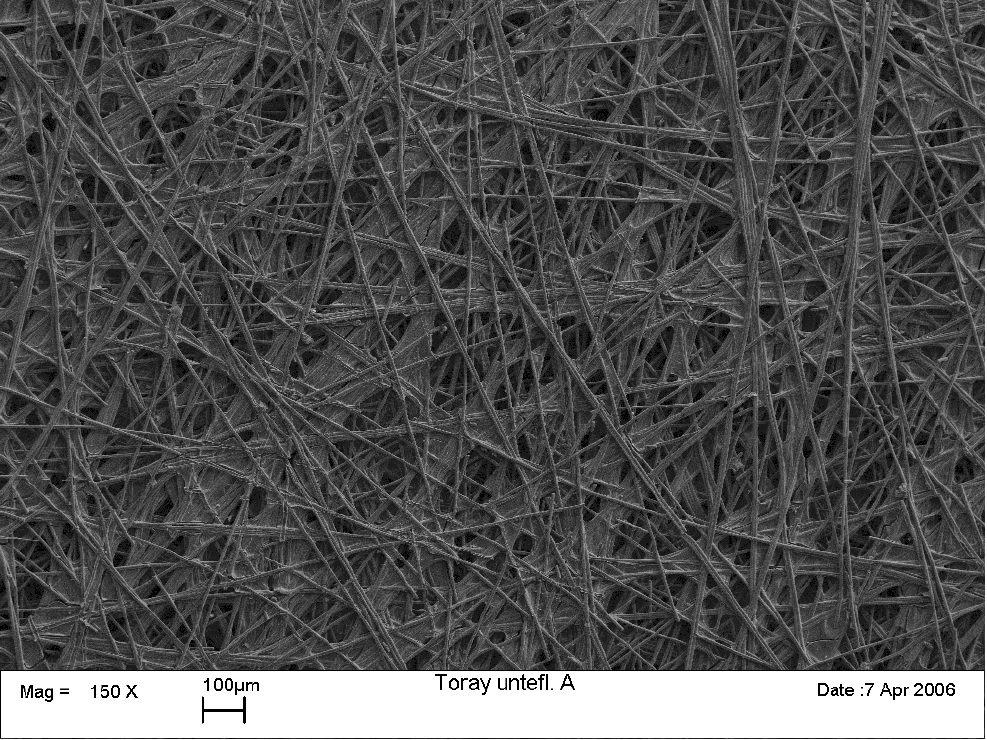}}
  \end{center}
  \caption{Microscopic structure of the gas diffusion layer as used in polymer
    electrolyte membrane fuel cells (courtesy of the Centre for Solar Energy and
    Hydrogen Research, Ulm)}\label{figToray}
\end{figure}

To quantify this dependence between the physical and the geometric structural
properties of porous materials, their \textit{intrinsic volumes} (sometimes also
called Minkowski functionals or quermassintegrals) are used. More formally,
porous fiber materials are usually modeled as homogeneous random closed sets
described in \cite{ma} and \cite{se82}. The mean volume and surface area of such
sets in an observation window averaged by the volume of the window are examples
of intensities of intrinsic volumes which are treated in detail in this paper.

The intention of this paper is to give formulae for cylinder processes which can
be used directly in applications, which is also demonstrated in the optimization
example. Thus the focus is on stationary Poisson processes which are the most
common in applications. A rather theoretical analysis can be found in the recent
paper by \cite{hoffm07}, where formulae for the curvature measures of a more
general non-stationary model of Poisson cylinder processes can be
found. In \cite{w87} the model for cylinders as used in this paper is
introduced, and curvature measures for different kinds of (not necessarily
Poisson) point processes are calculated. As opposed to that, in this paper
formulae for the covariance function, the contact distribution function, and a
different approach for the calculation of the specific surface area of Poisson
cylinder processes are worked out which have straightforward applied value.

As a model for fiber materials shown in Figure~\ref{figToray}, we consider
anisotropic stationary cylinder processes as homogeneous Poisson
point processes in the space of cylinders. Isotropic models of this kind (named also processes of
\textit{``thick'' fibers}, \textit{lamellae}, \textit{membranes} or
\textit{Poisson slices}) have been studied in detail, cf.\ \cite{ma},
\cite{se82}, \cite{davy78dis}, \cite{OhserMueckl00}. See \cite{schn87} for
further references. In the present paper, we generalize some of their results to
the anisotropic case.

After giving some preliminaries on cylinder processes (Section~\ref{sectFlats}),
we obtain formulae for the capacity functional, covariance function and contact
distribution function in Section~\ref{sectCapacity}. In
Section~\ref{sec:spec-surf-area}, we prove the formulae for the intensity of the
surface area measure of anisotropic stationary Poisson processes of
cylinders. Formulae for the intensities of other intrinsic volumes can
be found in the recent paper by \cite{hoffm07}. In the last section, we show how
the volume fraction of an anisotropic Poisson process of cylinders can be
maximized under certain constraints. In the solution, we use the formulae
obtained in previous sections.

Since the formulae obtained in
Sections~\ref{sectCapacity}--\ref{sec:spec-surf-area} are rather complex,
examples in the most interesting dimensions 2 and 3 are given, which can be
directly used in applications.

\section{Cylinder Processes} \label{sectFlats}

Let $\g$ be the \textit{Grassmann manifold} of all non-oriented
$k$-dimensional linear subspaces of $\Rd$, and $\mathfrak{G}$ the
$\sigma$-algebra of Borel subsets of $\g$ in its usual topology. Let
$\mathcal{C}$ $(\mathcal{K})$ be the set of all compact (compact convex) non-empty sets in
$\Rone^d$. Denote by $\mathcal{R}$ the \textit{convex ring}, i.e., the family of
all finite unions of non-empty compact convex sets. We provide these sets with the
Hausdorff metric, and denote the resulting Borel $\sigma$-algebra of
$\mathcal{R}$ by $\mathfrak{R}$.

Denote by $\nu_k(\cdot)$ the $k$-dimensional Lebesgue measure in $\Rone^k$, and
by $\mathcal{H}^k(\cdot)$ the $k$-dimensional Hausdorff measure. For any set $S
\subset \R^d$ denote by $S^\perp$ the linear subspace of the vectors which are
orthogonal to all elements of $S$. For $\xi$ being a $k$-dimensional flat (i.e.\
a $k$-dimensional linear subspace) we denote the $(d-k)$-dimensional Lebesgue
measure in $\xi^\perp$ by $\nu_{d-k}^\xi$. Let $\kappa_k$ ($\omega_k$) be the
volume (surface area) of a unit $k$-dimensional ball, respectively.

For a convex set $K \in \mathcal{K}$ and $\mathbf{x} \in \R^d$ let $p(K,
\mathbf{x})$ be the unique point in $K$ which is the closest to $\mathbf{x}$.
Then there exist measures $\Phi_k(K, \cdot)$ on $\mathcal{B}(\R^d)$, for $k =0,
\dots, d$ with
\begin{align*}
  \nu_d(\{\mathbf{x} \in K \oplus B_r(\orig): p(K, \mathbf{x}) \in B\})  
  = \sum_{k=0}^d r^{d-k} \kappa_{d-k} \Phi_k(K, B),
\end{align*}
where $K_1 \oplus K_2 = \{k_1 + k_2 | k_1 \in K_1, k_2 \in K_2\}$, and
  $B_r(o)$ is the ball of radius $r$ centered in the origin $o$.
  Furthermore we define $\Phi_k(\emptyset, B) = 0$ for all $B \in
  \mathcal{B}(\R^d)$. These measures are called \textit{curvature measures}.
  Since they are locally determined, they can be extended to functions with
  locally polyconvex sets as first argument in such a way that they remain
  additive. One should remark that these generalized curvatures measures are not
  necessarily positive, but signed measures. For a detailed
introduction, see \cite{SchneiWeil08}. The \textit{intrinsic volumes} of $K$ can
be defined as total curvature measures $V_k^d(K) = \Phi_k(K, \R^d)$ for $k =
0,\ldots,d$.

Following the approach introduced in \cite{w87}, we define a \textit{cylinder}
as the Minkowski sum of a flat $\xi \in G(k,d)$ and a set $K \subset \xi^\perp$
with $K \in \mathcal{R}$. Note that $K$ is not limited to sets with an
associated point in the origin. The flat $\xi$ is also called the
\textit{direction space} of $\xi \oplus K$ and $K$ is called the \textit{cross
  section} or \textit{base}. For a cylinder $Z = K \oplus \xi$ we define the
functions $L(Z) = \xi$ and $K(Z) = K$. Furthermore, define $\mathcal Z_k$ as the
set of all cylinders which have a $k$-dimensional direction space and base in
$\mathcal R$. For the volume of the cross-section of the cylinder we introduce
the notation $A(Z) = \nu_{d-k}^{L(Z)}(K(Z))$. By $S(K)$ we denote the surface
area of a set $K$. In the case of $K$ being the cross-section of a cylinder $K
\oplus L$ we shall use this notation for the surface area of $K$ in the space
$L^\perp$.

We call a measure $\varphi$ on $\mathcal Z_k$ \textit{locally finite} if
$\varphi(\{Z \in \mathcal Z_k | Z \cap K \ne \emptyset\}) < \infty$ for all $K
\in \mathcal C$. Let $\mathcal M(\mathcal Z_k)$ be the set of locally finite
counting measures on $\mathcal Z_k$ supplied with the usual $\sigma$-algebra $\mathfrak{M}$.
A point process $\Xi$ on $\mathcal Z_k$ which is a measurable mapping from a
probability space $(\Omega, \mathcal{F}, \P)$ into $(\mathcal M(\mathcal Z_k),
\mathfrak{M})$ is called a \textit{cylinder process}. Its distribution is given
by the probability measure $\P_\Xi \colon \mathfrak{M} \to [0,1]$, 
\mbox{$\P_\Xi(\cdot) = \P(\Xi \in \cdot)$.}
A cylinder process is called \textit{Poisson} if $\Xi(B)$ is Poisson distributed
with mean $\Lambda(B)$ for some locally finite measure $\Lambda$ on
$\mathcal{Z}_k$ and all Borel sets $B \subset \mathcal Z_k$, and $\Xi(B_1),
\Xi(B_2), \dots, \Xi(B_n)$ are independent for all disjoint Borel sets $B_1,
B_2, \dots, B_n \subset \mathcal Z_k$, and all $n \geq 2$, see details in
\cite{SchneiWeil08}. The measure $\Lambda$ is called the \textit{intensity
  measure} of $\Xi$. The Poisson cylinder process is called \textit{simple}, if
it has no multiple points. This is the case if and only if $\Lambda$ is diffuse.
For the rest of this paper we assume that $\Xi$ is a simple Poisson cylinder
process.
In this case, the union $U_\Xi = \cup_{Z \in \Xi} Z$ is a random closed set,
see \cite[p.~96]{SchneiWeil08}, where we denote by $Z \in \Xi$ the cylinders $Z$
in the support set of $\Xi$.

The cylinder process $\Xi$ is called \textit{stationary} if its distribution is
invariant with respect to translations in $\Rd$ and \textit{isotropic} if it is
invariant w.r.t.\ rotations about the origin. Let $\mathcal Z_k^o$ be the set of
all cylinders $K \oplus \xi$ with $\xi \in G(k, d), K \subset \xi^\perp$, and
for which the midpoint of the circumsphere of $K$ lies in the origin.

Following \cite{w87}, we define $i\colon (\mathbf{x}, Z) \mapsto \mathbf{x}+Z$
for $\mathbf{x} \in \Rd$ and $Z \in \mathcal Z_k^o$. If $\Xi$ is stationary,
then a number $\lambda \geq 0$ and a probability measure $\theta$ on
$\mathcal{Z}_k^o$ exist such that
\[
\Lambda(i(A \times C)) = \lambda \int_C \nu_{d-k}^{L(Z)}(A) \theta(\d Z)
\]
for all Borel sets $A \subset \Rd, C \subset \mathcal Z_k^o$. Then $\lambda$ is
called the \textit{intensity} and $\theta$ the \textit{shape distribution} of
$\Xi$.

As shown in \cite[p.~61]{kon:s}, $\theta$ can be decomposed further. Analogously
to $i$ define $j \colon (\varrho \times \xi) \mapsto \varrho \oplus \xi$ for
$\varrho \in \mathcal{R}$ and $\xi \in G(k,d)$. Then there exist a probability
measure $\alpha$ on $\mathfrak{G}$ (directional distribution of $\Xi$) and a
probability kernel $\beta \colon \mathfrak{R} \times G(k,d) \to [0,1]$ for which
$\beta(\cdot, \xi)$ is concentrated on subsets of $\xi^\perp$ such that for
arbitrary $R \in \mathfrak{R}$ and $G \in \mathfrak{G}$ the equation
\begin{align}\label{eq:decom.theta}
  \theta(j(R \times G)) = \int_G \beta(R, \xi) \alpha(\d \xi)
\end{align}
holds.

\section{Capacity functional and related characteristics} \label{sectCapacity}

In this section, we calculate the capacity functional (cf.\ \cite[p.~195]{skm})
for the union set $U_\Xi$ of the stationary Poisson process $\Xi$ of cylinders
with $k$-dimensional direction space introduced as above. As a corollary,
explicit formulae for the volume fraction, the covariance function, and the
contact distribution function of $U_\Xi$ follow easily. It is worth mentioning
that the resulting formula~(\ref{eq:T}) for the capacity functional generalizes
the formula in \cite[pp.~572-573]{se82}, given for Poisson slices in $\Rone^3$,
and a model with this capacity functional has already been proposed in
\cite[p.~148]{ma}.
\subsection{Capacity functional} \label{sectCapacity.subsectChoq}
For any random closed set $X$, the \textit{capacity functional}
$T_X(B) = \P(X \cap B \neq \emptyset), B \in \mathcal{C}$,
determines uniquely the distribution of $X$.

Let $\Pr_\eta(B)$ be the orthogonal projection of a set $B \subset \R^d$ along a
linear subspace $\eta \subset \Rd$ onto $\eta^\perp$.
\begin{lemma}
  \label{lem:T-Xi}
  The capacity functional of the union set $U_\Xi$ of the cylinder process $\Xi$
  is given by
  \begin{equation}
    \label{eq:T}
    T_{U_\Xi}(B) = 1 - \exp\left\{-\lambda \int_{\mathcal Z_k^o} \nu_{d-k}^{L(Z)}(-K(Z)
      \oplus {\Pr}_{L(Z)}(B) ) \, \theta(\d Z)\right\}.
  \end{equation}
\end{lemma}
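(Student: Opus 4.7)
The plan is to combine the void probability of a Poisson process with an elementary geometric identification of the set of cylinders that hit a given compact $B$, and then evaluate the resulting intensity via the decomposition of $\Lambda$ given in Section~\ref{sectFlats}.

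First, I would write $T_{U_\Xi}(B) = \P(U_\Xi \cap B \neq \emptyset) = 1 - \P(\Xi(N_B) = 0)$, where $N_B = \{Z \in \mathcal Z_k : Z \cap B \neq \emptyset\}$. Since $B$ is compact, $N_B$ is measurable and $\Lambda(N_B) < \infty$ by local finiteness of $\Lambda$. The Poisson void probability then yields $T_{U_\Xi}(B) = 1 - \exp(-\Lambda(N_B))$, so the whole task reduces to evaluating $\Lambda(N_B)$.

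Next comes the geometric core of the argument. Fix a reference cylinder $Z_0 = K \oplus \xi \in \mathcal Z_k^o$ and consider a translate $i(\mathbf x, Z_0) = \mathbf x + Z_0$. Because $Z_0$ is invariant under translations along $\xi = L(Z_0)$, only the component of $\mathbf x$ in $\xi^\perp$ affects the cylinder, which is exactly why the decomposition of $\Lambda$ in Section~\ref{sectFlats} integrates $\mathbf x$ against $\nu_{d-k}^{L(Z)}$. For $\mathbf x \in \xi^\perp$, the condition $(\mathbf x + K \oplus \xi) \cap B \neq \emptyset$ is equivalent to $(\mathbf x + K) \cap \Pr_\xi(B) \neq \emptyset$ after projecting along $\xi$ onto $\xi^\perp$, and this in turn is equivalent to $\mathbf x \in \Pr_\xi(B) \oplus (-K) = -K \oplus \Pr_{L(Z_0)}(B)$. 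Thus the $\xi^\perp$-section of the hitting set is precisely the Minkowski sum appearing in~(\ref{eq:T}).

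Finally, I would feed this section description into the decomposition $\Lambda(i(A \times C)) = \lambda \int_C \nu_{d-k}^{L(Z)}(A) \theta(\d Z)$, extended from product sets to general measurable sets with $Z$-dependent sections by a standard monotone-class / Fubini argument. Applied to $N_B$, this yields
\[
\Lambda(N_B) = \lambda \int_{\mathcal Z_k^o} \nu_{d-k}^{L(Z)}\bigl(-K(Z) \oplus \Pr_{L(Z)}(B)\bigr) \, \theta(\d Z),
\]
and inserting this into the void-probability identity gives~(\ref{eq:T}).

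I expect the main obstacle to be bookkeeping rather than depth: one has to check that the map $Z \mapsto \nu_{d-k}^{L(Z)}(-K(Z) \oplus \Pr_{L(Z)}(B))$ is $\theta$-measurable (which follows from joint continuity of projection and Minkowski sum on compact sets together with measurability of $K$ and $L$ on $\mathcal Z_k^o$) and that the $Z$-dependent-section extension of the decomposition formula is legitimate. Modulo these routine verifications, the identification of the hitting set as $-K(Z) \oplus \Pr_{L(Z)}(B)$ is the only genuine computation required.
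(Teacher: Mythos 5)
Your proposal is correct and follows essentially the same route as the paper: void probability of the Poisson process, identification of the $\xi^\perp$-section of the hitting set as the Minkowski sum $-K(Z)\oplus\Pr_{L(Z)}(B)$, and evaluation of $\Lambda$ of that set via the decomposition $\Lambda(i(A\times C))=\lambda\int_C \nu_{d-k}^{L(Z)}(A)\,\theta(\d Z)$ (the paper phrases this last step as integrating the indicator $\mathbf{1}\{(Z+\mathbf{x})\cap B\neq\emptyset\}$ with Fubini, which is the same extension to $Z$-dependent sections you describe). Your extra attention to measurability is fine but not something the paper dwells on.
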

\begin{proof}
  Let $B$ be a compact set in $\Rd$. Then by Fubini's theorem and
  \cite[p.~96]{SchneiWeil08}, we get
\begin{align*}
  1 - T_{U_\Xi}(B)
  &= \exp\{-\Lambda(\{Z \in \mathcal{Z}_k | Z \cap B \ne \emptyset\})\}
  = \exp\left\{-\int_{\mathcal{Z}_k} \mathbf{1}\{\tilde Z \cap B \ne \emptyset\} \Lambda(\d \tilde Z)\right\}\\
  &= \exp\left\{-\lambda \int_{\mathcal Z_k^o} \int_{L(Z)^\perp}
    \mathbf{1}\{(Z + \mathbf{x}) \cap B \ne \emptyset\} \, \d \mathbf{x} \, \theta(\d Z)\right\}\\
  &= \exp\left\{-\lambda \int_{\mathcal Z_k^o} \int_{L(Z)^\perp} 
    \mathbf{1}\{(K(Z) + \mathbf{x}) \cap {\Pr}_{L(Z)}(B) \ne \emptyset\} \, \d \mathbf{x} \, \theta(\d Z)\right\},
\end{align*}
where $\tilde Z = \mathbf{x} + Z$.

One can easily see that $K(Z)+\mathbf{x}$ hits
${\Pr}_{L(Z)}(B)$ if and only if $\mathbf{x}$ belongs to the Minkowski sum of $-K(Z)$ and
${\Pr}_{L(Z)}(B)$.

Thus we have
\begin{align*}
  1 - T_{U_\Xi}(B) &= \exp\left\{-\lambda \int_{\mathcal Z_k^o} \int_{L(Z)^\perp} \mathbf{1}\{(K(Z) + \mathbf{x}) \cap {\Pr}_{L(Z)}(B) \ne \emptyset\} \, \d \mathbf{x} \, \theta(\d Z)\right\}\\
  &= \exp\left\{-\lambda \int_{\mathcal Z_k^o} \int_{L(Z)^\perp} \mathbf{1}\{\mathbf{x} \in
    -K(Z) \oplus {\Pr}_{L(Z)}(B)\}\, \d \mathbf{x} \, \theta(\d Z)\right\}\\
  &= \exp\left\{-\lambda \int_{\mathcal Z_k^o} \nu_{d-k}^{L(Z)}(-K(Z) \oplus
    {\Pr}_{L(Z)}(B) ) \, \theta(\d Z)\right\}. 
\end{align*}
\end{proof}
A few remarks are in order.
\begin{itemize}
\item It follows from the local finiteness of $\Lambda$ that
  \begin{align} \label{eq:ineq-schwei}
  \int_{\mathcal Z_k^o} \nu_{d-k}^{L(Z)}(-K(Z) \oplus {\Pr}_{L(Z)}(B)
  ) \, \theta(\d Z) < \infty,
  \end{align}
  cf.~\cite[p.~96, Theorem 3.6.3.\ and remark]{SchneiWeil08}.
\item The choice of $k=0$ yields the capacity functional of the stationary
  Boolean Model $\Xi'$ with the primary grain $K$ and intensity $\lambda$,
  cf.\ \cite[p.~62]{ma}:
  \[
  T_{\Xi'}(B) = 1 - \e^{-\lambda \E \nu_d(-K \oplus B)}\,.
  \]
\item Another important special case is that of $K$ being a.s.\ a point. Then
  the model coincides with a $k$-flat process $\Xi''$, cf.\ \cite[p.~67]{ma} with
  the capacity functional
  \begin{align*}
    T_{\Xi''}(B) = 1 - \exp\left\{-\lambda \int_{\mathcal Z_k^o} 
      \nu_{d-k}^{L(Z)}({\Pr}_{L(Z)}(B)) \, \theta(\d Z)\right\}.
  \end{align*}


%
\item The case of $B = \{\orig\}$ yields the volume fraction $p = \P(\orig \in U_\Xi) =\linebreak
\E \nu_d(U_\Xi \cap [0, 1]^d)$ of $U_\Xi$:
\begin{equation} \label{eq:p}
  p = T_{U_\Xi}(\{\orig\}) = 1 - \exp\left\{- \lambda
    \int_{\mathcal Z_k^o} A(Z) \, \theta(\d Z)\right\}.
\end{equation}
A generalization of this formula can also be found in \textup{\cite{hoffm07}} in
the non-stationary setting.

Throughout this paper, we assume that $p > 0$, i.e.\ $\int_{\mathcal{Z}_k^o}
A(Z) \theta(\d Z) > 0$. Thus, we have $p \in (0, 1)$, cf.\
inequality~\eqref{eq:ineq-schwei}.
\end{itemize}

\subsection{Covariance function} \label{sectCapacity.subsectCov}
In the following we investigate the covariance function of $U_\Xi$. It is
defined as\linebreak $C_{U_\Xi}(\mathbf{h}) = \P(\orig, \mathbf{h} \in \Xi)$, $\mathbf{h} \in \Rd$,
cf.\ \cite[p.~68]{skm}.

Because of the relation $C_{U_\Xi}(\mathbf{h}) = \P(\orig, \mathbf{h} \in \Xi) = 2p - T_{U_\Xi}(\{\orig,
\mathbf{h}\})$ it is closely connected with the capacity functional of the set $B = \{\orig,
\mathbf{h}\}$, which is
\begin{equation}
  \label{eq:T0h}
  T_{U_\Xi}(\{\orig, \mathbf{h}\}) = 1 - \exp\left\{- \lambda \int_{\mathcal Z_k^o}
    \nu_{d-k}^{L(Z)}(\{\orig, {\Pr}_{L(Z)}(\mathbf{h})\} \oplus - K(Z)) \, \theta(\d Z)\right\}.
\end{equation}

Let $\gamma_A$ denote the \textit{covariogram} of a measurable set $A \subset L(Z)^\perp$
defined by
\[\gamma_A(\mathbf{x}) = \nu_{d-k}^{L(Z)}(A \cap (A - \mathbf{x}))
\]
for $\mathbf{x} \in L(Z)^\perp$.

\begin{lemma}
  For $\mathbf{h} \in \Rone^d$ we have
  \begin{equation}
    \label{eq:C(h)}
    \begin{split}
      C_{U_\Xi}(\mathbf{h}) =&\ 1 - 2 \exp\left\{-\lambda \int_{\mathcal Z_k^o} A(Z) \,
        \theta(\d Z)\right\}\\
      &\ + \exp\left\{-2\lambda \int_{\mathcal Z_k^o} A(Z) \,
        \theta(\d Z) + \lambda \int_{\mathcal Z_k^o} \gamma_{K(Z)}({\Pr}_{L(Z)}(\mathbf{h}))
        \, \theta(\d Z)\right\}.
    \end{split}
  \end{equation}
\end{lemma}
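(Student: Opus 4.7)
The plan is to combine the relation $C_{U_\Xi}(\mathbf{h}) = 2p - T_{U_\Xi}(\{\orig,\mathbf{h}\})$ stated in the paragraph preceding the lemma with the formula \eqref{eq:p} for $p$ and the formula \eqref{eq:T0h} for $T_{U_\Xi}(\{\orig,\mathbf{h}\})$. The only nontrivial ingredient is the evaluation of the Lebesgue measure appearing inside the exponential in \eqref{eq:T0h}; once this is expressed in terms of $A(Z)$ and the covariogram $\gamma_{K(Z)}$, the statement drops out by bookkeeping.

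Concretely, fix $Z \in \mathcal Z_k^o$, write $L = L(Z)$, $K = K(Z)$, and set $\mathbf{y} = {\Pr}_{L}(\mathbf{h}) \in L^\perp$. Since $\{\orig, \mathbf{y}\} \oplus (-K) = (-K) \cup (\mathbf{y} - K)$, inclusion--exclusion inside $L^\perp$ gives
\[
\nu_{d-k}^{L}\bigl(\{\orig,\mathbf{y}\}\oplus(-K)\bigr) = 2\,\nu_{d-k}^{L}(K) - \nu_{d-k}^{L}\bigl((-K)\cap(\mathbf{y}-K)\bigr).
\]
Now I would apply the reflection $\mathbf{x} \mapsto -\mathbf{x}$, which preserves $\nu_{d-k}^L$ and sends $(-K)\cap(\mathbf{y}-K)$ to $K \cap (K-\mathbf{y})$, so the last term equals $\gamma_{K}(\mathbf{y}) = \gamma_{K(Z)}({\Pr}_{L(Z)}(\mathbf{h}))$. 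Substituting this and using $A(Z) = \nu_{d-k}^{L(Z)}(K(Z))$, the integrand in \eqref{eq:T0h} becomes $2 A(Z) - \gamma_{K(Z)}({\Pr}_{L(Z)}(\mathbf{h}))$.

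Then I would simply assemble the answer. From \eqref{eq:p},
\[
2p = 2 - 2\exp\Bigl\{-\lambda \int_{\mathcal Z_k^o} A(Z)\,\theta(\d Z)\Bigr\},
\]
and from the computation above,
\[
T_{U_\Xi}(\{\orig,\mathbf{h}\}) = 1 - \exp\Bigl\{-2\lambda\int_{\mathcal Z_k^o} A(Z)\,\theta(\d Z) + \lambda\int_{\mathcal Z_k^o}\gamma_{K(Z)}({\Pr}_{L(Z)}(\mathbf{h}))\,\theta(\d Z)\Bigr\}.
\]
Subtracting the second display from the first yields \eqref{eq:C(h)}.

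The only conceptual point that requires attention is the identity $\nu_{d-k}^L((-K)\cap(\mathbf{y}-K)) = \gamma_K(\mathbf{y})$, which is the main obstacle only in that one must be careful with signs since $K$ is not assumed to be centrally symmetric; the reflection step shows that the asymmetric-looking intersection on the left is indeed the covariogram of $K$ (not of $-K$). Once this is in hand, the rest is algebraic manipulation of the two exponentials, and measurability/finiteness of the integrals follows from \eqref{eq:ineq-schwei} applied to $B = \{\orig, \mathbf{h}\}$.
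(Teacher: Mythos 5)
Your proposal is correct and follows essentially the same route as the paper: both reduce $C_{U_\Xi}(\mathbf{h}) = 2p - T_{U_\Xi}(\{\orig,\mathbf{h}\})$ to the identity $\nu_{d-k}^{L(Z)}(\{\orig,{\Pr}_{L(Z)}(\mathbf{h})\}\oplus(-K(Z))) = 2A(Z) - \gamma_{K(Z)}({\Pr}_{L(Z)}(\mathbf{h}))$ and then combine the two exponentials. Your inclusion--exclusion plus reflection argument simply spells out the volume computation that the paper states without detail, and it is carried out correctly.
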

\begin{proof}
  Consider the term $\{\orig, {\Pr}_{L(Z)}(\mathbf{h})\} \oplus - K(Z) = - K(Z)
  \cup ({\Pr}_{L(Z)}(\mathbf{h}) - K(Z))$. Its volume is equal to
  \begin{align*}
    &\ \nu_{d-k}^{L(Z)}(- K(Z) \oplus \{\orig, {\Pr}_{L(Z)}(\mathbf{h})\})\\
    =&\ 2 A(Z) - \nu_{d-k}^{L(Z)}(K(Z) \cap
    (K(Z) -{\Pr}_{L(Z)}(\mathbf{h})))\\
    =&\ 2 A(Z) - \gamma_{K(Z)}({\Pr}_{L(Z)}(\mathbf{h})).
  \end{align*}

  Using equations~(\ref{eq:p}) and~(\ref{eq:T0h}), the covariance $C_{U_\Xi}(\mathbf{h})$
  rewrites
  \begin{align*}
    C_{U_\Xi}(\mathbf{h}) =&\ 2p - T_{U_\Xi}(\{\orig, \mathbf{h}\})\\
    =&\ 1 - 2 \exp\left\{- \lambda \int_{\mathcal Z_k^o} A(Z) \, \theta(\d Z)\right\}\\
    &+ \exp\left\{- \lambda \int_{\mathcal Z_k^o} \nu_{d - k}^{L(Z)}(- K(Z) \oplus
       \{\orig, {\Pr}_{L(Z)}(\mathbf{h})\})\, \theta(\d Z)\right\}\\
    =&\ 1 - 2 \exp\left\{-\lambda \int_{\mathcal Z_k^o} A(Z) \, \theta(\d Z)\right\}\\
    &\ + \exp\left\{-2\lambda \int_{\mathcal Z_k^o} A(Z)
      \, \theta(\d Z) + \lambda \int_{\mathcal Z_k^o} \gamma_{K(Z)}({\Pr}_{L(Z)}(\mathbf{h}))
      \, \theta(\d Z)\right\}. 
  \end{align*}
\end{proof}
\begin{example}
  In the following, we give an example of a cylinder process in two dimensions
  with cylinders of constant thickness $2a$ where the integrals
  in~\textup{(\ref{eq:C(h)})} can be calculated explicitly.

  Let $l$ be an arbitrary line through the origin, $\varphi$ the angle between
  the $x$-axis and $l^\perp$, and $\mathbf{h} = (r, \psi)$ a vector in polar
  coordinates. We use the notation $B_a(\orig) \times \varphi$ with $\varphi \in
  [0, \pi)$ for a cylinder with radius $a$ and direction space $l$. Since
  $|{\Pr}_{l}(\mathbf{h})| = r |\cos(\varphi - \psi)|$,
  formula~\textup{(\ref{eq:C(h)})} rewrites
\begin{align*}
  C_{U_\Xi}(\mathbf{h}) = 1 - 2 \e^{-2 \lambda a} + \e^{-4 \lambda a + \lambda I},
\end{align*}
where
\begin{align*}
  I &= \int_0^\pi (2a-|{\Pr}_{l}(\mathbf{h})|)\mathbf{1}\{|{\Pr}_{l}(\mathbf{h})| \le
    2a\}\, \theta(B_a(\orig) \times \d \varphi)\\
    &= \int_{\varphi \in [0, \pi]: |\cos(\varphi - \psi)| \le \frac{2a}{r}}
  (2a-r|\cos(\varphi-\psi)|) \, \theta(B_a(\orig) \times \d \varphi).
\end{align*}

In the isotropic case ($\theta(B_a(\orig) \times \d \varphi) = \d \varphi/\pi$) we can
choose $\psi$ arbitrarily, for example $\psi=\pi/2$. This yields
\begin{align*}
  I = \int_{\varphi \in [0, \pi]: \sin \varphi \le \frac{2a}{r}}
  (2a-r\sin \varphi) \frac{\d \varphi}{\pi}.
\end{align*}
In case $r \leq 2a$ this simplifies to
\begin{align*}
  I = 2a-r\int_0^\pi \sin \varphi \frac{\d \varphi}{\pi}
    = 2a-r\frac{2}{\pi}.
\end{align*}
And for $r > 2a$ we get
\begin{align*}
  I =&\ \frac{2a}{\pi} \left(\int_0^{\arcsin\frac{2a}{r}} \d \varphi +
      \int_{\pi-\arcsin\frac{2a}{r}}^\pi \d \varphi \right) +\\
    &+ \frac{r}{\pi} \left(\int_0^{\arcsin\frac{2a}{r}} (-\sin \varphi) \, \d \varphi
    + \int_{\pi-\arcsin\frac{2a}{r}}^\pi (-\sin \varphi) \, \d \varphi \right)\\
  =&\ \frac{4a}{\pi} \arcsin \left(\frac{2a}{r}\right) + \frac{2r}{\pi}
  \left(\cos \left(\arcsin \left(\frac{2a}{r}\right) \right)
    - 1 \right)\\
  =&\ 2a - \frac{4a}{\pi} \arccos \left(\frac{2a}{r}\right)
  - \frac{2r}{\pi}\left(1 - \sqrt{1- \left(\frac{2a}{r}\right)^2}\right),
\end{align*}
which gives us the final formula
\[
C_{U_\Xi}(\mathbf{h}) =
\begin{cases}
  1 - 2\e^{-2\lambda a} + \e^{-2 \lambda a - \frac{2 \lambda r}{\pi}},&\text{if } r \le 2a,\\
  1 - 2\e^{-2\lambda a} + \exp\bigg\{-2 \lambda a -\\
    \quad - \frac{\lambda}{\pi}\left(4a \arccos
      \left(\frac{2a}{r}\right) + 2r \left(1 - \sqrt{1 - \frac{4a^2}{r^2}}
      \right)\right)\bigg\},&\text{if } r > 2a.\\
\end{cases}
\]
\end{example}
The first derivative of $C_{U_\Xi}(\mathbf{h})$ will be needed later for the calculation of the
 intensity $\overline S_\Xi$ of the surface area measure of $U_\Xi$.
\begin{proposition}
  \label{pro:C_derivative}
  Suppose that $\Xi$ is a simple stationary Poisson cylinder process with shape
  distribution $\theta$ and $\int_{\mathcal Z_k^o} S(K(Z)) \theta(\d Z) < \infty$.
  Then the derivative of the covariance function in direction $\mathbf{h}$ at
  the origin is given by
  \begin{align*}  \label{eq:C_derivative}
    C_{U_\Xi}'(\orig, \mathbf{h})&\\
    = \lambda& \exp\left\{-\lambda \int_{\mathcal Z_k^o} A(Z) \, \theta(\d Z)\right\}
    \int_{\mathcal Z_k^o} \gamma_{K(Z)}'(\orig, {\Pr}_{L(Z)}(\mathbf{h}))
    \, [\mathbf{h}, L(Z)] \, \theta(\d Z)
    ,
  \end{align*}
  where $\gamma_A'(\orig, \eta)$ denotes the derivative of $\gamma_A$ at the origin
  in direction $\eta$, and $[\xi, \eta]$ is the volume of the parallelepiped
  spanned over the orthonormal bases of the linear subspaces of $\xi$ and
  $\eta$.
\end{proposition}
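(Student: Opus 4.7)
The plan is to differentiate the explicit formula~(\ref{eq:C(h)}) for $C_{U_\Xi}(\mathbf{h})$ directly at $\mathbf{h}=\orig$ and then to exchange the directional derivative with the $\theta$-integral. Abbreviating $I_1 = \int_{\mathcal Z_k^o} A(Z)\,\theta(\d Z)$ and $\Gamma(\mathbf{h}) = \int_{\mathcal Z_k^o} \gamma_{K(Z)}(\Pr_{L(Z)}(\mathbf{h}))\,\theta(\d Z)$, formula~(\ref{eq:C(h)}) reads $C_{U_\Xi}(\mathbf{h}) = 1 - 2\e^{-\lambda I_1} + \e^{-2\lambda I_1 + \lambda\Gamma(\mathbf{h})}$. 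The first two summands are $\mathbf{h}$-independent, while the identity $\gamma_{K(Z)}(\orig) = A(Z)$ gives $\Gamma(\orig) = I_1$. Hence the chain rule applied to the third summand yields
\[
C_{U_\Xi}'(\orig, \mathbf{h}) \;=\; \lambda\,\e^{-\lambda I_1}\,\Gamma'(\orig, \mathbf{h}),
\]
which already produces the prefactor appearing in the claim.

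It then remains to identify $\Gamma'(\orig, \mathbf{h})$. I would pass the directional derivative inside the integral and, for each fixed $Z$, apply the chain rule to $t \mapsto \gamma_{K(Z)}(\Pr_{L(Z)}(t\mathbf{h}))$, using the linearity $\Pr_{L(Z)}(t\mathbf{h}) = t\Pr_{L(Z)}(\mathbf{h})$. The projection $\Pr_{L(Z)}(\mathbf{h}/|\mathbf{h}|) \in L(Z)^\perp$ has Euclidean norm $\sin\angle(\mathbf{h}, L(Z))$, which by definition of the bracket is exactly $[\mathbf{h}, L(Z)]$. Therefore the derivative of $\gamma_{K(Z)}$ along $\Pr_{L(Z)}(\mathbf{h})$ splits as the scalar factor $[\mathbf{h}, L(Z)]$ times the one-sided derivative of $\gamma_{K(Z)}$ at the origin in the unit direction of the projection, which is precisely what the proposition denotes by $\gamma'_{K(Z)}(\orig, \Pr_{L(Z)}(\mathbf{h}))$. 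Integrating against $\theta$ yields the expression claimed.

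The main technical point is to justify the exchange of differentiation and $\theta$-integration, and this is exactly where the hypothesis $\int_{\mathcal Z_k^o} S(K(Z))\,\theta(\d Z) < \infty$ comes into play. I would invoke dominated convergence, based on the standard covariogram estimate
\[
|\gamma_A(\mathbf{x}) - \gamma_A(\orig)| \;=\; \tfrac{1}{2}\,\nu(A\triangle(A-\mathbf{x})) \;\leq\; c\,S(A)\,|\mathbf{x}|,
\]
valid for $A$ in the convex ring up to a dimension-dependent constant $c$, applied with $A = K(Z)$ in the ambient space $L(Z)^\perp$. Combined with the trivial bound $|\Pr_{L(Z)}(\mathbf{h})| \leq |\mathbf{h}|$, this provides a $\theta$-integrable majorant $c\,|\mathbf{h}|\,S(K(Z))$ for the difference quotients, so dominated convergence legitimises pulling the derivative inside the integral. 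Verifying this uniform bound is the only nontrivial step; the remainder of the argument is a bookkeeping exercise in the chain rule.
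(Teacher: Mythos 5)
Your proposal is correct and follows essentially the same route as the paper: the paper likewise writes $C_{U_\Xi}(\mathbf{h})-C_{U_\Xi}(\orig)=\e^{-\lambda\int A(Z)\,\theta(\d Z)}(\e^{J}-1)$ with $J=\lambda\int[\gamma_{K(Z)}({\Pr}_{L(Z)}(\mathbf{h}))-A(Z)]\,\theta(\d Z)$, expands the exponential to first order exactly as your chain-rule step does, controls $J=O(|\mathbf{h}|)$ via the same covariogram--surface-area Lipschitz bound you cite, and then evaluates $\lim J/|\mathbf{h}|$ by dominated convergence, extracting $[\mathbf{h},L(Z)]=|{\Pr}_{L(Z)}(\mathbf{h})|/|\mathbf{h}|$ just as you do. No substantive difference.
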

\begin{proof}
To simplify the notation, we shall also write $[\mathbf{x}, \eta]$
  for $[\xi, \eta]$ if $\xi$ is the line spanned by $\mathbf{x}$.
By (\ref{eq:p}), we have
\[
C_{U_\Xi}(\orig) = p = 1 - \exp\left\{-\lambda \int_{\mathcal Z_k^o} A(Z)
  \, \theta(\d Z)\right\}
\]
and thus
\[
C_{U_\Xi}(\mathbf{h}) - C_{U_\Xi}(\orig) = \exp\left\{-\lambda \int_{\mathcal Z_k^o} A(Z)
  \, \theta(\d Z)\right\}(\e^J-1),
\]
where
\begin{align*}
  J = \lambda \int_{\mathcal Z_k^o} [\gamma_{K(Z)}({\Pr}_{L(Z)}(\mathbf{h})) -
  A(Z)] \, \theta(\d Z).
\end{align*}
We observe that
$A(Z) - \nu_{d-k}^{L(Z)}(K(Z) \cap (K(Z) - {\Pr}_{L(Z)}(\mathbf{h})))$ is
equal to zero if\linebreak ${\Pr}_{L(Z)}(\mathbf{h}) = \orig$, and is less than or equal to
$|{\Pr}_{L(Z)}(\mathbf{h})| S(K(Z))$, otherwise.

This yields
\[
|J| \le \lambda |\mathbf{h}| \int_{\mathcal Z_k^o} S(K(Z)) \,
\theta(\d Z) = O(|\mathbf{h}|), \quad \mathbf{h} \rightarrow \orig.
\]
Thus we obtain $\e^J -1 = J + o(J) = J + o(|\mathbf{h}|)$ for $\mathbf{h} \rightarrow \orig$, and
\[
  C_{U_\Xi}'(\orig, \mathbf{h}) = \lim_{\mathbf{h} \rightarrow \orig} \frac{C_{U_\Xi}(\mathbf{h}) - C_{U_\Xi}(\orig)}{|\mathbf{h}|}
  = \exp\left\{-\lambda \int_{\mathcal Z_k^o} A(Z) \, \theta(\d Z)\right\}
    \left(\lim_{\mathbf{h} \rightarrow \orig} \frac J {|\mathbf{h}|}\right).
\]
So we need to investigate the behavior of $J /|\mathbf{h}|$ as $\mathbf{h} \rightarrow
\orig$. By the dominated convergence theorem, we get
\begin{align*}
  \lim_{\mathbf{h} \rightarrow \orig} \frac J {|\mathbf{h}|}
  &= \lambda \int_{\mathcal Z_k^o} \lim_{\mathbf{h} \rightarrow \orig}
  \Bigg( \frac{\nu_{d-k}^{L(Z)}(K(Z) \cap
    (K(Z) - {\Pr}_{L(Z)}(\mathbf{h})))
      - A(Z)}
    {\frac{|\mathbf{h}|}{|{\Pr}_{L(Z)}(\mathbf{h})|} |{\Pr}_{L(Z)}(\mathbf{h})|} \Bigg)
    \, \theta(\d Z)\\
  &= \lambda \int_{\mathcal Z_k^o} \lim_{t \rightarrow \orig}
  \Bigg( \frac{\nu_{d-k}^{L(Z)}(K(Z) \cap (K(Z) - t))
    - A(Z)}
    {|t|} \Bigg) |\cos \angle(\mathbf{h}, L(Z)^\perp)| \, \theta(\d Z)\\
  &= \lambda \int_{\mathcal Z_k^o} \gamma_{K(Z)}'(\orig,t) \, [\mathbf{h}, L(Z)]
  \, \theta(\d Z),
\end{align*}
where $|{\Pr}_{L(Z)}(\mathbf{h})|/|\mathbf{h}| = |\cos \angle(\mathbf{h}, L(Z)^\perp)|$, $t =
{\Pr}_{L(Z)}(\mathbf{h})$, and $\angle(\mathbf{h}, L(Z)^\perp)$ is the angle
between vector $\mathbf{h}$ and plane $L(Z)^\perp$.
\end{proof}

\subsection{Contact distribution function} \label{sectCapacity.subsectContact}
Let $B$ be an arbitrary compact set with $\orig \in B$ (called the structuring
element), and let $r > 0$. The contact distribution function (cf.\
\cite[p.~71]{skm}) $H_B(r) = \P(U_\Xi \cap r B \neq \emptyset\, |\, \orig \notin
U_\Xi)$ of the union set of the stationary Poisson cylinder process $\Xi$ with
structuring element $B$ and volume fraction $p \in (0, 1)$ can be calculated as
follows:
\begin{align}\label{eq:contactDistFunc}
  \begin{split}
  H_B(r) =&\ 1 - \frac{\P(U_\Xi \cap rB = \emptyset)}{1 - p}
  = 1 - \frac{1 - T_{U_\Xi}(rB)}{1 - p}\\
  =&\ 1 - \frac{\exp\left\{-\lambda \int_{\mathcal Z_k^o} \nu_{d-k}^{L(Z)}(-K(Z)
      \oplus {\Pr}_{L(Z)}(rB) ) \, \theta(\d Z)\right\}}
  {\exp\left\{- \lambda \int_{\mathcal Z_k^o} A(Z) \, \theta(\d Z)\right\}}\\
  =&\ 1 - \exp\left\{-\lambda \int_{\mathcal Z_k^o} \left[\nu_{d-k}^{L(Z)}(-K(Z)
    \oplus {\Pr}_{L(Z)}(rB)) - A(Z)\right] \, \theta(\d Z)\right\}.
  \end{split}
\end{align}
Further simplification of this formula is possible in some special cases.

Consider the contact distribution function $H_B$ with $B$ being a line segment
between the origin and a unit vector $\eta$. In this special case the contact
distribution function is called \textit{linear}. With a slight abuse of
notation we shall use a vector to represent the line segment between the origin
and the endpoint of the vector. It will be clear from the context whether the
vector or the line segment is meant.

\begin{lemma}
  \label{lem:linearCDF}
  If the probability kernel $\beta(\cdot,\xi)$
  (cf.~\textup{(\ref{eq:decom.theta})}) is concentrated on convex bodies and
  isotropic in the first argument for all $\xi \in G(k,d)$ then for a unit
  vector $\eta$ the linear contact distribution function of $U_\Xi$ is given by
  \begin{equation}
    \label{eq:lin-cont-distr}
    H_\eta(r) = 1 - \e^{-\lambda r C_o(\eta)}
  \end{equation}
  with
  \[
  C_o(\eta) = c_{d,k} \int_{G(k,d)} \int_{\mathcal{K} \cap \xi^\perp}
      S(K) \beta(\d K, \xi) [\xi, \eta] \alpha(\d \xi),
  \]
  $c_{d,k} = \frac{\omega_{d-k+1}}{2 \pi \omega_{d-k}}$, and $\mathcal{K} \cap \xi^\perp$ denotes the family of all convex bodies in $\xi^\perp$.
\end{lemma}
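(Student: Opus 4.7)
The plan is to specialize~\eqref{eq:contactDistFunc} to $B = \eta$ and reduce the bracket inside the exponential to a scalar linear in $r$. Three ingredients do the work: a Cavalieri-type identity for the Minkowski sum of a convex body with a segment, the decomposition~\eqref{eq:decom.theta} of $\theta$, and Cauchy's projection formula combined with the hypothesized isotropy of $\beta(\cdot,\xi)$.

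First I would observe that $\Pr_{L(Z)}(r\eta) \subset L(Z)^\perp$ is a segment of length $r\,[\eta, L(Z)]$ along some unit direction $u_Z \in L(Z)^\perp$ (the direction being irrelevant when $[\eta, L(Z)] = 0$). Since $\beta(\cdot, L(Z))$ is concentrated on convex bodies, $K(Z)$ is convex, and a Fubini argument on its interval fibres in direction $u_Z$ yields the elementary identity
\[
\nu_{d-k}^{L(Z)}\bigl(-K(Z) \oplus \Pr_{L(Z)}(r\eta)\bigr) - A(Z) = r\,[\eta, L(Z)]\, v_{d-k-1}\bigl(K(Z) \mid u_Z^\perp\bigr),
\]
where $v_{d-k-1}(K(Z) \mid u_Z^\perp)$ denotes the $(d-k-1)$-volume of the orthogonal projection of $K(Z)$ onto the hyperplane of $L(Z)^\perp$ perpendicular to $u_Z$.

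Next I would apply~\eqref{eq:decom.theta} so that the exponent in~\eqref{eq:contactDistFunc} becomes
\[
-\lambda r \int_{G(k,d)} [\eta, \xi] \int_{\mathcal{K} \cap \xi^\perp} v_{d-k-1}(K \mid u_\xi^\perp)\, \beta(\d K, \xi)\, \alpha(\d \xi).
\]
For fixed $\xi$, the unit vector $u_\xi \in \xi^\perp$ is constant with respect to the inner $\beta$-integration, so the isotropy of $\beta(\cdot,\xi)$ on $\xi^\perp$ allows replacing $v_{d-k-1}(K \mid u_\xi^\perp)$ by its spherical average. Cauchy's projection formula in the $(d-k)$-dimensional space $\xi^\perp$ evaluates this average as
\[
\frac{1}{\omega_{d-k}}\int_{\xi^\perp \cap S^{d-1}} v_{d-k-1}(K \mid u^\perp)\, \sigma(\d u) = \frac{\kappa_{d-k-1}}{\omega_{d-k}}\, S(K),
\]
and a short Gamma-function calculation based on $\omega_m = 2\pi^{m/2}/\Gamma(m/2)$ and $\kappa_m = \pi^{m/2}/\Gamma(m/2+1)$ identifies the constant with $\omega_{d-k+1}/(2\pi\omega_{d-k}) = c_{d,k}$.

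Substituting these pieces back into~\eqref{eq:contactDistFunc} turns the exponent into $-\lambda r\, C_o(\eta)$ with $C_o(\eta)$ as claimed, giving~\eqref{eq:lin-cont-distr}. The main subtle point is the interplay of isotropy and Cauchy's formula: one must verify that $u_\xi$ depends only on $\xi$ and $\eta$ (not on the base $K$), so that the rotational averaging is legitimate inside the $\beta$-integral alone, after which Cauchy's formula converts the spherical mean into the surface area $S(K)$. The remainder is routine: the Cavalieri identity is elementary for convex $K(Z)$, and the decomposition of $\theta$ is just a rewriting.
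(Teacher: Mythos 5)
Your proof is correct, and it reaches the same destination by the same overall route as the paper --- specialize~\eqref{eq:contactDistFunc} to a segment, decompose $\theta$ via~\eqref{eq:decom.theta}, and exploit the isotropy of $\beta(\cdot,\xi)$ through a rotational average over $\text{rot}(\xi^\perp)$ --- but the central computation is carried out differently. The paper writes $\nu_{d-k}^{L(Z)}(-K(Z)\oplus{\Pr}_{L(Z)}(r\eta))-A(Z)$ as a mixed volume $(d-k)V({\Pr}_{L(Z)}(r\eta),K(Z),\dots,K(Z))$, expands it against the surface area measure $S_{d-k-1}(K(Z),\cdot)$, and then evaluates the rotational average of $|\langle\vartheta u,\pi_\xi(\eta)\rangle|$ by citing \cite[Corollary~5.2]{Spod02_1}; you instead use the elementary fibrewise identity $\nu_{d-k}^{L(Z)}(-K\oplus s)=A(Z)+\ell\,v_{d-k-1}(K\mid u^\perp)$ for a segment $s$ of length $\ell$ in direction $u$, and then convert the spherical mean of the projection volumes into $\tfrac{\kappa_{d-k-1}}{\omega_{d-k}}S(K)$ via Cauchy's projection formula. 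The two are equivalent (Cauchy's formula is exactly the surface-area-measure representation of $v_{d-k-1}(K\mid u^\perp)$ in disguise), but your version is more self-contained: it avoids mixed volumes and the external reference, at the price of invoking Cauchy's formula and checking the Gamma-function identity $\kappa_{d-k-1}=\omega_{d-k+1}/(2\pi)$, both of which you do correctly. You also correctly flag the one genuine subtlety --- that the direction $u_\xi$ of the projected segment depends only on $\xi$ and $\eta$, so the averaging over rotations of $\xi^\perp$ can legitimately be pushed inside the $\beta$-integral --- and the convexity hypothesis on $\beta(\cdot,\xi)$ is used exactly where it is needed, namely to justify the interval-fibre (Cavalieri) identity, which would fail for general elements of $\mathcal{R}$.
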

\begin{proof}
  \footnote{The idea of this proof goes back to an anonymous referee.}
  It follows from~(\ref{eq:contactDistFunc}) that~(\ref{eq:lin-cont-distr})
  holds iff
  \begin{align*}
    r \, C_o(\eta) = \int_{\mathcal Z_k^o} \left[\nu_{d-k}^{L(Z)}(-K(Z)
      \oplus {\Pr}_{L(Z)}(r \eta)) - A(Z)\right] \, \theta(\d Z).
  \end{align*}
  Using the notation introduced in \cite[p.~275-279]{schn} for mixed volumes
  (here all mixed volumes and surface measures are w.r.t.\ $L(Z)^\perp$) we
  calculate
  \begin{align*}
    \nu_{d-k}^{L(Z)}(-K(Z) \oplus {\Pr}_{L(Z)}(r \eta))& - A(Z)\\
    =&\ (d-k) V({\Pr}_{L(Z)}(r \eta), K(Z), \dots, K(Z))\\
    =&\ \frac{r}{2} \int_{S^{d-1} \cap L(Z)^\perp} |\langle u, \pi_{L(Z)}(\eta)\rangle| \, S_{d-k-1}(K(Z), \d u),
  \end{align*}
  where $\langle \cdot, \cdot\rangle$ denotes the scalar product, and
  $S_{d-k-1}(K(Z), \cdot )$ is the surface area measure of $K(Z)$ in $L(Z)^\perp$.

  Thus,
  \begin{align*}
    C_o(\eta) =&\ \frac{1}{2}\int_{\mathcal{Z}_k^o} \int_{S^{d-1} \cap L(Z)^\perp} 
      |\langle u, \pi_{L(Z)}(\eta)\rangle| \, S_{d-k-1}(K(Z), \d u) \,\theta(\d Z)\\
    =&\ \frac{1}{2}\int_{G(k,d)} \int_{\mathcal{K} \cap \xi^\perp} \int_{S^{d-1} \cap \xi^\perp} 
      |\langle u, \pi_\xi(\eta)\rangle| \, S_{d-k-1}(K, \d u) \,\beta(\d K, \xi)\,\alpha(\d \xi).
  \end{align*}

  Because of the rotation invariance of $\beta(\cdot, \xi)$, the value of the
  integral does not change if we replace $K$ with $\vartheta K$ for an arbitrary
  rotation $\vartheta$ in $\xi^\perp$. Furthermore, we get the following
  equation since the the surface area measure is invariant w.r.t.\
    rotations when they are applied to both arguments.
  \begin{align*}
    \int_{S^{d-1} \cap \xi^\perp} 
      |\langle u, \pi_\xi(\eta)\rangle| \, S_{d-k-1}(\vartheta K, \d u)
    =\int_{S^{d-1} \cap \xi^\perp} 
      |\langle \vartheta u, \pi_\xi(\eta)\rangle| \, S_{d-k-1}(K, \d u).
  \end{align*}
  Thus, integration over the group $\text{rot}(\xi^\perp)$ of rotations in
  $\xi^\perp$ equipped with the Haar probability measure leads to
  \begin{align*}
    &\int_{S^{d-1} \cap \xi^\perp}
      |\langle u, \pi_\xi(\eta)\rangle| \, S_{d-k-1}(K, \d u)\\
    =&\ \int_{\text{rot}(\xi^\perp)} \int_{S^{d-1} \cap \xi^\perp}
      |\langle u, \pi_\xi(\eta)\rangle| \, S_{d-k-1}(K, \d u) \, \d \vartheta\\
    =&\ \int_{S^{d-1} \cap \xi^\perp} \int_{\text{rot}(\xi^\perp)}
      |\langle \vartheta u, \pi_\xi(\eta)\rangle| \, \d \vartheta \, S_{d-k-1}(K, \d u)\\
    =&\ 2 c_{d,k} \, S(K) [\xi,\eta],
  \end{align*}
  where $c_{d,k}$ is the constant from the claim, and we used
  \cite[Corollary~5.2]{Spod02_1} for the last equality.
  
  This leads to
  \begin{align*}
    C_o(\eta) = c_{d,k} \int_{G(k,d)} \int_{\mathcal{K} \cap \xi^\perp}
      S(K) \beta(\d K, \xi) [\xi, \eta] \alpha(\d \xi).
  \end{align*}
\end{proof}
%
Now let the structuring element $B$ be the ball $B_1(\orig)$. In this case the contact
distribution function is called \textit{spherical}. It is obvious that
${\Pr}_{L(Z)}(B_r(\orig))$ is a ball of radius $r$ in the $(d-k)$-dimensional
subspace $L(Z)^\perp$. If $K(Z)$ is almost surely convex then the use of the
classical Steiner formula leads to
\begin{equation*}
  \E \nu_{d-k}^{L(Z)} (-K(Z) \oplus {\Pr}_{L(Z)}(B_r(\orig)))
  =\ \E A(Z) + \sum_{i=1}^{d-k} \kappa_i
    \E V_{d-k-i}^{d-k}(K(Z)) r^i,
\end{equation*}
which yields
\[
H_{B_1(\orig)}(r) = 1 - \exp\left\{-\lambda \sum_{i=1}^{d-k} \kappa_i r^i
  \int_{\mathcal Z_k^o} V_{d-k-i}^{d-k}(K(Z)) \, \theta(\d Z)\right\}.
\]
\begin{example}In what follows, the case of dimensions two and three is
  considered in detail. It is assumed that the conditions of
  Lemma~\textup{\ref{lem:linearCDF}} hold.

\begin{itemize}
\item For $d = 2, k = 1$ Lemma~\textup{\ref{lem:linearCDF}} yields
  \[
  C_o(\eta) 
  = c_{2,1} \int_{G(1,2)} \int_{\mathcal{K} \cap \xi^\perp}
      S(K) \beta(\d K, \xi) [\xi, \eta] \alpha(\d \xi)
  = \int_{G(1,2)} 2 [\xi, \eta] \alpha(\d \xi).
  \]
  Hence, it holds $H_{\eta}(r) = 1 - \exp\left\{-2 \lambda\, r \int_{G(1,2)} \,[\xi,
    \eta]\, \alpha(\d \xi)\right\}$, and so $H_{\eta}(r)$ does not depend on $K(Z)$.

  And for the structuring element being $B=B_1(\orig)$ one gets
  \[
  H_{B_1(\orig)}(r) = 1 - \exp\left\{-2 \lambda\, r \int_{\mathcal{Z}_1^o} V_0^1(K(Z)) \, \theta(\d Z)\right\}
  = 1 - \e^{-2 \lambda\, r}.
  \]
  Interestingly the result does not depend on the distribution of the cross
  section.

\item For $d = 3, k = 1$ we get
  \begin{align*}
    C_o(\eta) 
    =&\ \frac{2}{\pi} \int_{G(1,3)} \int_{\mathcal{K} \cap \xi^\perp}
      S(K) \beta(\d K, \xi) [\xi, \eta] \alpha(\d \xi)
  \end{align*}
  which yields
  \[
  H_{\eta}(r) 
  = 1 - \exp\left\{-\frac{2 \lambda r}{\pi} \int_{G(1,3)} \int_{\mathcal{K} \cap \xi^\perp}
      S(K) \beta(\d K, \xi) [\xi, \eta] \alpha(\d \xi)\right\}.
  \]
  For $K(Z) = B_a(\orig)$ we have
  \[
  C_o(\eta) = \frac{2 \pi a}{\pi} \int_{G(k,d)} [\xi, \eta]\, \alpha(\d \xi).
  \]
  Thus,
  \[
  H_{\eta}(r) = 1-\e^{-2 \lambda r a \int_{G(k,d)} [\xi, \eta]\, \alpha(\d \xi)}.
  \]

  And if the structuring element is the unit ball ($B=B_1(\orig)$) then
  \begin{align*}
    H_{B_1(\orig)}(r) =&\ 1 - \exp\left\{-\lambda \left(2 r \int_{\mathcal{Z}_1^o} V_1^2(K(Z))
        \, \theta(\d Z)
        + r^2 \int_{\mathcal{Z}_1^o} \kappa_2 \, \theta(\d Z)\right)\right\}\\
    =&\ 1 - \exp\left\{-\lambda \left(r \int_{\mathcal{Z}_1^o} S(K(Z)) \, \theta(\d Z)
        + r^2 \pi \right)\right\},
  \end{align*}
  where $S(K(Z))$ is the perimeter of $K(Z)$.

  If additionally $K(Z)$ is a ball of constant radius $a$ then
  \[
  H_{B_1(\orig)}(r) = 1 - \e^{-2 \pi a \lambda r - \pi \lambda r^2}.
  \]
\end{itemize}
\end{example}

\section{Specific surface area} \label{sec:spec-surf-area}

In the recent paper \cite{hoffm07}, the specific intrinsic volumes of a rather
general non-stationary cylinder process are given. In the stationary anisotropic
case, some of these formulae can be simplified. In this section, we give an
alternative proof for the specific surface area of the union set $U_\Xi$ of a
simple stationary anisotropic Poisson cylinder process $\Xi$ leading to a simpler
formula than that of \cite{hoffm07} which can be immediately used in
applications.

The specific surface area $\overline S_\Xi$ is defined as the mean surface area of
$U_\Xi$ per unit volume. More formally, consider the measure $S_{U_\Xi}(B) = \E
\mathcal{H}^{d-1}(\partial U_\Xi \cap B)$ for all Borel sets $B \subset
\Rone^d$. We assume that this measure is locally finite, i.e.\ $S_{U_\Xi}(B) <
\infty$ for all compact $B$. Sufficient conditions for this can be found in
Lemma~\ref{lem:S_Xi_finite}. Due to the stationarity of $\Xi$, the measure
$S_{U_\Xi}$ is translation invariant. By Haar's lemma, there exists a constant
$\overline S_\Xi \geq 0$ such that $S_{U_\Xi}(B) = \overline S_\Xi \,\nu_d(B)$ for all Borel
sets $B$, cf.\ \cite{amb90}. The factor $\overline S_\Xi$ is called the
\textit{specific surface area} of $U_\Xi$.

\begin{lemma}
  \label{lem:S_Xi_finite}
  The specific surface area $\overline S_\Xi$ of the union set $U_\Xi$ of a stationary
  anisotropic cylinder process $\Xi$ is finite if $\int_{\mathcal{Z}_k^o}
  S(K(Z)) \theta(\d Z) < \infty$.
\end{lemma}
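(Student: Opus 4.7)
The plan is to bound the specific surface area from above by the mean total surface area contributed by all cylinders intersecting a unit window, and then to evaluate the mean via the Campbell formula for the Poisson process.

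More concretely, since $\Xi$ is stationary, take $W = [0,1]^d$ so that $\overline S_\Xi = S_{U_\Xi}(W) = \E\,\mathcal{H}^{d-1}(\partial U_\Xi \cap W)$. For any locally finite family of closed sets $\{Z_i\}$ with union $U$ one has $\partial U \subseteq \bigcup_i \partial Z_i$, hence
\[
  \mathcal{H}^{d-1}(\partial U_\Xi \cap W) \le \sum_{Z \in \Xi} \mathcal{H}^{d-1}(\partial Z \cap W).
\]
Taking expectations and applying Campbell's theorem for the Poisson process $\Xi$, together with the disintegration of $\Lambda$ into $\lambda$, $\theta$ and Lebesgue measure on $L(Z)^\perp$ given before \eqref{eq:decom.theta}, I would obtain
\[
  \overline S_\Xi \le \lambda \int_{\mathcal{Z}_k^o} \int_{L(Z)^\perp} \mathcal{H}^{d-1}\bigl(\partial(Z+\mathbf{x}) \cap W\bigr)\, \d \mathbf{x}\, \theta(\d Z).
\]

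Next I would evaluate the inner integral for a fixed centered cylinder $Z = K \oplus \xi$ with $\xi = L(Z)$ and $K \subset \xi^\perp$. Since $\xi$ and $\xi^\perp$ are orthogonal, the map $(u,v)\mapsto u+v$ from $\partial_{\xi^\perp}K \times \xi$ to $\partial Z$ is an isometry of $(d-1)$-dimensional Hausdorff measure onto its image, so
\[
  \mathcal{H}^{d-1}\bigl(\partial(Z+\mathbf{x})\cap W\bigr) = \int_{\partial_{\xi^\perp} K}\int_{\xi} \ind\{u+v+\mathbf{x} \in W\}\,\d v\,\d\mathcal{H}^{d-k-1}(u).
\]
Integrating first over $\mathbf{x} \in \xi^\perp$ and using Fubini, the $\d v\,\d\mathbf{x}$ integral for each fixed $u$ just recovers $\nu_d(W - u) = 1$, so the whole triple integral collapses to $\mathcal{H}^{d-k-1}(\partial_{\xi^\perp}K) = S(K)$. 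Substituting back yields the clean bound
\[
  \overline S_\Xi \le \lambda \int_{\mathcal{Z}_k^o} S(K(Z))\,\theta(\d Z) < \infty,
\]
which is the desired finiteness.

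The main obstacles I anticipate are technical rather than conceptual. First, one must justify the inclusion $\partial U_\Xi \subseteq \bigcup_{Z\in\Xi}\partial Z$ in the locally finite Poisson setting (this follows from the fact that any boundary point of a union of closed sets lies on the boundary of at least one of the sets containing it, combined with local finiteness of $\Lambda$ guaranteeing only finitely many cylinders hit a bounded window almost surely). Second, because bases $K$ lie in the convex ring $\mathcal{R}$ rather than being convex, I would use that $S(K)$ denotes the $(d-k-1)$-dimensional Hausdorff measure of the topological boundary of $K$ in $\xi^\perp$ and that $\partial Z = (\partial_{\xi^\perp}K) \oplus \xi$; countable subadditivity of $\mathcal{H}^{d-1}$ then keeps the upper bound valid in the non-convex case. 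With these points settled, the Fubini computation gives the result immediately.
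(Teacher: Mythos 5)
Your proposal is correct and follows essentially the same route as the paper: bound $\partial U_\Xi$ by $\bigcup_{Z\in\Xi}\partial Z$, apply Campbell's theorem with the disintegration of $\Lambda$ into $\lambda$, $\theta$ and Lebesgue measure on $L(Z)^\perp$, and then reduce the inner integral to $S(K(Z))$ using the product structure $\partial Z=(\partial_{\xi^\perp}K)\oplus\xi$. The only (harmless) difference is in the last step: you evaluate the translation integral exactly by Fubini over $W=[0,1]^d$, obtaining the sharper bound $\lambda\int_{\mathcal{Z}_k^o}S(K(Z))\,\theta(\d Z)$, whereas the paper works with the unit ball and bounds the indicator by a product of projections, which yields the same finiteness conclusion with an extra factor $\kappa_k\kappa_{d-k}/\kappa_d$.
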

\begin{proof}
  Let $B := B_1(\orig)$ be the unit ball about the origin. Then we calculate
  using the abbreviation $L_o = L(Z_o)$ and Campbell's theorem
  \begin{align*}
    &S_{U_\Xi}(B) = 
    \E \mathcal{H}^{d-1}(\partial U_\Xi \cap B)
      \le \E \sum_{Z \in \Xi} \mathcal{H}^{d-1}(\partial Z \cap B)
      = \int_{\mathcal{Z}_k} \mathcal{H}^{d-1}(\partial Z \cap B) \Lambda(\d Z)\\
      &= \lambda \int_{\mathcal{Z}_k^o} \int_{L_o^\perp} 
        \mathcal{H}^{d-1}((\partial Z_o + \mathbf{x}) \cap B)
        \, \nu_{d-k}^{L_o}(\d \mathbf{x}) \, \theta(\d  Z_o)\\
      &= \lambda \int_{\mathcal{Z}_k^o} \int_{L_o^\perp} \int_{\partial Z_o + \mathbf{x}}
        \ind_B(\mathbf{y}) \mathcal{H}^{d-1}(\d \mathbf{y})
        \, \nu_{d-k}^{L_o}(\d \mathbf{x}) \, \theta(\d  Z_o)\\
      &= \lambda \int_{\mathcal{Z}_k^o} \int_{L_o^\perp} \int_{\partial Z_o}
        \ind_B(\mathbf{y} + \mathbf{x}) \mathcal{H}^{d-1}(\d \mathbf{y})
        \, \nu_{d-k}^{L_o}(\d \mathbf{x}) \, \theta(\d  Z_o)\\
      &\le \lambda \int_{\mathcal{Z}_k^o} \int_{\partial Z_o} \int_{L_o^\perp}
        \ind_{\Pr_{L_o^\perp}(B)}(\Pr_{L_o^\perp}(\mathbf{y})) 
          \ind_{\Pr_{L_o}(B)}(\Pr_{L_o}(\mathbf{y}) + \mathbf{x})
          \nu_{d-k}^{L_o}(\d \mathbf{x})\, \mathcal{H}^{d-1}(\d \mathbf{y}) \, \theta(\d  Z_o)\\
      &= \lambda \int_{\mathcal{Z}_k^o} \int_{\partial Z_o}
        \ind_{\Pr_{L_o^\perp}(B)}(\Pr_{L_o^\perp}(\mathbf{y})) 
        \nu_{d-k}^{L_o}(\Pr_{L_o}(B))\, \mathcal{H}^{d-1}(\d \mathbf{y}) \, \theta(\d  Z_o)\\
      &= \lambda \nu_{d-k}^{L_o}(\Pr_{L_o}(B)) \int_{\mathcal{Z}_k^o}
        \mathcal{H}^{d-1}(\partial Z_o \cap (\Pr_{L_o^\perp}(B) \times L_o^\perp)) \, \theta(\d  Z_o)\\
      &= \lambda \kappa_{d-k} \int_{\mathcal{Z}_k^o}
        \nu_{d-k}^{L_o^\perp}(\Pr_{L_o^\perp}(B))
        \mathcal{H}^{d-k-1}(\partial K(Z_o)) \, \theta(\d  Z_o)\\
      &= \lambda \kappa_k \kappa_{d-k} \int_{\mathcal{Z}_k^o}
        S(K(Z_o)) \, \theta(\d  Z_o).
  \end{align*}
  This yields $\overline S_\Xi = S_{U_\Xi}(B) / \nu_d(B) < \infty $.
\end{proof}

The following results hold for any random closed set $X$ with realizations
almost surely from the \textit{extended convex ring} $\mathcal{S}$ which is
defined as the family of sets $B$ with $B \cap W \in \mathcal{R}$ for any convex
compact observation window $W$.

\begin{lemma}
  \label{lem:S_Xi_RCS}
  Let $X \in \mathcal{S}$ be an arbitrary stationary random closed set with
  finite specific surface area. Then the specific surface area of $X$ is given
  by
  \begin{equation}
    \label{eq:S_V}
    S_X = \frac{d \kappa_d}{\kappa_{d-1}} \int_{G(1,d)} \lambda(\xi) \d \xi,
  \end{equation}
  where $\d\xi$ is the Haar probability measure on $G(1, d)$, $\lambda(\xi) =
  \frac{1}{2} \E \Phi_0(X \cap \xi, B_1(\orig) \cap \xi)$ is the intensity of
  the number of connected components of $X \cap \xi$ on a line $\xi \in G(1,d)$.
\end{lemma}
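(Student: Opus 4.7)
My plan is to derive~\eqref{eq:S_V} from a deterministic Cauchy--Crofton identity applied to each realization, take expectations, and exploit stationarity. Fix $X \in \Salg$ and a bounded convex window $W \subset \Rd$ with $\nu_d(W) > 0$. The boundary $\partial X \cap W$ is $(d-1)$-rectifiable and carries an approximate outer normal $n(\mathbf{x})$ for $\mathcal{H}^{d-1}$-a.e.\ $\mathbf{x}$. Applying the coarea formula to the restriction of $\Pr_\xi$ to $\partial X \cap W$, whose tangential Jacobian at a regular point equals $|\cos\angle(n(\mathbf{x}), \xi)|$, and then averaging in $\xi$ against the Haar probability measure on $G(1,d)$---using the classical identity $\int_{G(1,d)} |\cos\angle(n,\xi)|\,\d\xi = \frac{2\kappa_{d-1}}{d\kappa_d}$, valid for every $n \in \s$ by rotation invariance---yields the Cauchy--Crofton identity
\begin{equation*}
\mathcal{H}^{d-1}(\partial X \cap W) = \frac{d\kappa_d}{2\kappa_{d-1}} \int_{G(1,d)} \int_{\xi^\perp} \#\bigl(\partial X \cap (\xi + \mathbf{y}) \cap W\bigr)\, \d\mathbf{y}\, \d\xi.
\end{equation*}

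Next I take expectations (Tonelli applies since integrands are nonnegative and $S_X < \infty$ by assumption). For each fixed $\xi$, stationarity of $X$ in the $\xi^\perp$-direction implies that the boundary point process $\partial X \cap (\xi + \mathbf{y})$ on the translated line is stationary in $\mathbf{y}$ with some $\mathbf{y}$-independent intensity $N(\xi) \geq 0$, so
\begin{equation*}
\E\#\bigl(\partial X \cap (\xi + \mathbf{y}) \cap W\bigr) = N(\xi)\, \mathcal{H}^1\bigl((\xi + \mathbf{y}) \cap W\bigr).
\end{equation*}
Integration in $\mathbf{y}$ by Fubini yields $N(\xi)\,\nu_d(W)$; dividing by $\nu_d(W)$ isolates $S_X = \frac{d\kappa_d}{2\kappa_{d-1}} \int_{G(1,d)} N(\xi)\, \d\xi$.

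To identify $N(\xi) = 2\lambda(\xi)$, I apply the local Steiner formula on the line $\xi$ to a single closed interval $[\mathbf{a}, \mathbf{b}] \subset \xi$: a direct calculation gives $\Phi_0([\mathbf{a}, \mathbf{b}], C) = \tfrac{1}{2}(\ind_C(\mathbf{a}) + \ind_C(\mathbf{b}))$ for every Borel $C \subset \xi$. Since $X \cap \xi$ is almost surely a locally finite union of disjoint closed intervals, additivity of $\Phi_0$ on the convex ring extends this to $\Phi_0(X \cap \xi, C) = \tfrac{1}{2}\#\bigl(\partial(X \cap \xi) \cap C\bigr)$. Choosing $C = B_1(\orig) \cap \xi$ (a diameter of length $2$) and taking expectation yields $\E \Phi_0(X \cap \xi, B_1(\orig) \cap \xi) = N(\xi)$, so $N(\xi) = 2\lambda(\xi)$ and~\eqref{eq:S_V} follows. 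The main technical obstacle is the rigorous justification of the coarea step on the rectifiable boundary of a set in the extended convex ring, together with the identification $\partial(X \cap \xi) = \partial X \cap \xi$ for $\nu_{d-1}^\xi$-a.e.\ translate $\mathbf{y}$, both of which are consequences of standard results in geometric measure theory; the assumption $S_X < \infty$ ensures integrability for all the Fubini interchanges.
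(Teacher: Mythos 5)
Your argument is correct and reaches the right constant, but it reconstructs by hand a step that the paper simply cites. The paper's proof is two lines: it writes $\E \mathcal{H}^{d-1}(\partial X\cap B_1(\orig)) = 2\,\E\Phi_{d-1}(X, B_1(\orig))$ and then applies the local Crofton formula for curvature measures of polyconvex sets (Schneider--Weil, Th.~6.4.3) to convert $\Phi_{d-1}(X,\cdot)$ into an integral over lines of $\Phi_0(X\cap(\xi+\mathbf{x}),\cdot)$, after which stationarity collapses the inner integral exactly as in your last step. You instead derive the Cauchy--Crofton identity from the coarea formula on the rectifiable boundary and only afterwards translate boundary-point counts into $\Phi_0$ via the explicit computation $\Phi_0([\mathbf{a},\mathbf{b}],C)=\tfrac12(\ind_C(\mathbf{a})+\ind_C(\mathbf{b}))$; both the integral-geometric constant $\frac{d\kappa_d}{2\kappa_{d-1}}$ and the bookkeeping $N(\xi)=2\lambda(\xi)$ check out. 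Your route is more self-contained and would extend to any stationary set with locally $(d-1)$-rectifiable boundary, not just the extended convex ring; the price is that it leans on the identifications $\partial(X\cap\ell)=\partial X\cap\ell$ for a.e.\ line $\ell$ and on each component of $X\cap\ell$ being a nondegenerate closed interval, which require $X$ to be regular (no lower-dimensional pieces). That hypothesis is absent from the lemma as stated, but the paper's own chain has the same hidden assumption at the step $\mathcal{H}^{d-1}(\partial X\cap B)=2\Phi_{d-1}(X,B)$ (false, e.g., for a stationary union of $(d-1)$-dimensional facets), and regularity is explicitly imposed in Theorem~\ref{the:S_Xi} where the lemma is applied, so this is a shared caveat rather than a defect of your proof.
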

\begin{proof}
  By Crofton's formula for polyconvex sets (cf.\ \cite[Th.~6.4.3]{SchneiWeil08}) and
  Fubini's theorem, we have
  \begin{align*}
    S_X =&\ \frac{1}{\kappa_d} \E \mathcal{H}^{d-1}(\partial X \cap B_1(\orig))
    = \frac{2}{\kappa_d} \E \Phi_{d-1}(X, B_1(\orig))\\
    =&\ \frac{2 \Gamma(\frac{d+1}2) \sqrt\pi}{\kappa_d \Gamma(d/2)} \E \int_{G(1, d)}
      \int_{\xi^\perp} \Phi_0(X \cap (\xi +\mathbf{x}), B_1(\orig) \cap (\xi +\mathbf{x}))
      \nu_{d-1}^{\xi}(\d \mathbf{x})\d \xi\\
    =&\ \frac{d \kappa_d}{\kappa_{d-1}} \int_{G(1, d)} \frac{1}{2} 
      \E \Phi_0(X \cap \xi, B_1(\orig) \cap \xi) \d \xi. 
\end{align*}
\end{proof}

The following result generalizes the well-known formula
\begin{align}
  \label{eq:S_X_isotrop}
  S_X=-\frac{d \kappa_d}{\kappa_{d-1}} C'_X(0)
\end{align}
\cite[p.~204]{skm}, for stationary, isotropic, and a.s.\ regular random closed
sets $X \in \mathcal{S}$ to the anisotropic case. A closed set is called
\textit{regular} if it coincides with the closure of its interior. Note that,
since in the isotropic case $C_X(h)$, $h \in \R^d$ depends only on the length of
$h$, and not on $h$ itself, in this formula $C_X$ is a function of a real
variable, namely the length of $h$. For the particular case of stationary
anisotropic random sets in $\R^3$ formula~\eqref{eq:S_Xi} can also be found
(without a rigorous proof) in \cite{berryman87}.
\begin{theorem}
  \label{the:S_Xi}
  Let $X$ be an a.s.\ regular  stationary random closed set with
  realizations from $\mathcal{S}$ and finite specific surface area. If $C_X(\mathbf{h})$
  is its covariance function then the specific surface area of $X$ is given by
  the formula
  \begin{equation}
    \label{eq:S_Xi}
    S_X = - \frac{d \kappa_d}{\kappa_{d-1}} \int_{G(1,d)} C_X'(\orig, \mathbf{r}_\xi) \d \xi,
  \end{equation}
  where $C_X'(\mathbf{h}, \mathbf{v})$ is the derivative of $C_X(\mathbf{h})$ at
  $\mathbf{h}$ in direction of unit vector $\mathbf{v}$, and $\mathbf{r}_\xi$ is
  a direction unit vector of a line $\xi \in G(1,d)$.
\end{theorem}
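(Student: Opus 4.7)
The plan is to combine Lemma \ref{lem:S_Xi_RCS} with a one-dimensional identity that relates the component intensity on a line to the directional derivative of the covariance function. By Lemma \ref{lem:S_Xi_RCS},
\[
S_X = \frac{d \kappa_d}{\kappa_{d-1}} \int_{G(1,d)} \lambda(\xi) \, \d \xi,
\]
with $\lambda(\xi) = \frac{1}{2} \E \Phi_0(X \cap \xi, B_1(\orig) \cap \xi)$. Hence the theorem reduces to the pointwise identity $\lambda(\xi) = - C_X'(\orig, \mathbf{r}_\xi)$ for each line $\xi \in G(1,d)$.

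I would establish this identity by passing to the section $Y_\xi := X \cap \xi$, which under the standing assumptions on $X$ is a.s.\ a stationary regular random closed subset of $\xi \cong \R$ whose component intensity equals $\lambda(\xi)$. Parametrising $\xi$ by $t \mapsto t\,\mathbf{r}_\xi$, its one-dimensional covariance satisfies
\[
c_\xi(t) = \P(\orig \in Y_\xi,\, t\,\mathbf{r}_\xi \in Y_\xi) = C_X(t\,\mathbf{r}_\xi),
\]
so that $c_\xi'(0^+) = C_X'(\orig, \mathbf{r}_\xi)$ in the sense of the directional derivative used in Proposition \ref{pro:C_derivative}. The problem therefore reduces to the purely one-dimensional statement $\lambda_Y = - c_Y'(0^+)$ for any stationary regular random closed set $Y \subset \R$ with finite component intensity $\lambda_Y$.

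For such a $Y$, regularity ensures that, up to a null event, $Y$ is a locally finite disjoint union of closed intervals whose left and right endpoints form stationary point processes of common intensity $\lambda_Y$ (by alternation). For small $t > 0$ the event $\{\orig \in Y,\, t \notin Y\}$ occurs precisely when a right endpoint of a component of $Y$ lies in $(0,t)$, and Campbell's theorem applied to that endpoint process gives
\[
c_Y(0) - c_Y(t) = \P(\orig \in Y,\, t \notin Y) = t\,\lambda_Y + o(t), \quad t \to 0^+.
\]
Dividing by $t$ and passing to the limit yields $c_Y'(0^+) = -\lambda_Y$. Substituting this back into the expression furnished by Lemma \ref{lem:S_Xi_RCS} produces formula~\eqref{eq:S_Xi}.

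The main obstacle I anticipate is the rigorous justification of the linearisation $\P(\orig \in Y,\, t \notin Y) = t\,\lambda_Y + o(t)$: one has to exploit the regularity of $Y$ to guarantee that its boundary points are isolated and that its components are genuine intervals, and then show that the probability of more than one endpoint falling in $(0,t)$ is $o(t)$ by a standard second-order Palm estimate, so that the linear coefficient can be identified with $\lambda_Y$.
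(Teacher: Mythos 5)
Your proposal is correct in outline and shares the paper's first step verbatim: both reduce via Lemma~\ref{lem:S_Xi_RCS} to the one-dimensional identity $\lambda(\xi) = -C_X'(\orig, \mathbf{r}_\xi)$ for the section $Y_\xi = X \cap \xi$. Where you diverge is in how that identity is obtained. The paper disposes of it with a symmetrization trick: for a stationary $U \subset \R$ it multiplies by an independent random sign $V$ uniform on $\{-1,1\}$, observes that $UV$ is isotropic, applies the already-quoted isotropic formula~\eqref{eq:S_X_isotrop} in dimension one (where $d\kappa_d/\kappa_{d-1} = 2$), and uses $S_U = S_{UV}$, $C_U'(0) = C_{UV}'(0)$ to conclude $S_U = -2C_U'(0)$. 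You instead re-derive the one-dimensional formula from first principles, writing $c_Y(0) - c_Y(t) = \P(\orig \in Y,\ t \notin Y)$ and identifying the linear coefficient with the endpoint intensity via Campbell's theorem. This is more work but buys self-containedness and makes explicit what regularity is actually used (nondegenerate interval components, so that the component intensity equals the right-endpoint intensity). Two small points of care in your route: the event $\{\orig \in Y,\ t \notin Y\}$ is not \emph{precisely} ``a right endpoint lies in $(0,t)$'' --- one must additionally have the left endpoint of that component at or before $\orig$ and no new component covering $t$ --- but both corrections are events forcing two endpoints (or a whole component) into $[0,t]$, hence of probability $o(t)$ by the orderliness of a simple stationary point process of finite intensity (Dobrushin's lemma) together with a Palm estimate showing components of length less than $t$ are negligible; you flag exactly this, and it closes. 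Note also that finiteness of $\lambda(\xi)$ for almost every $\xi$, needed for these estimates, is supplied by the assumed finiteness of $S_X$ through Lemma~\ref{lem:S_Xi_RCS}. The residual issue that $X \cap \xi$ need not inherit regularity (tangential, degenerate intersections) is glossed over equally by your argument and by the paper's, so it is not a defect specific to your approach.
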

\begin{proof}
  For a stationary random closed set $U \subset \R$ from the extended convex
  ring denote by $-U$ the set reflected at the origin. Define a random variable
  $V$ which is uniformly distributed on $\{-1, 1\}$ and independent of $U$. The
  random closed set $U V$ is obviously isotropic, and thus
  formula~(\ref{eq:S_X_isotrop}) yields $S_{UV}=-2 C'_{UV}(0)$. Since $S_U =
  S_{UV}$ and $C_U'(0) = C_{UV}'(0)$, this means that $S_U=-2 C'_U(0)$.

  Applying this to $U = X \cap \xi$, $\xi \in G(1, d)$, we get $\lambda(\xi) =
  \frac{1}{2} S_{X \cap \xi} = -C_{X\cap\xi}'(0) = -C_X'(\orig, \mathbf{r}_\xi)$.
  Lemma~\ref{lem:S_Xi_RCS} completes the proof.
\end{proof}
If $X$ is an a.s.\ regular  two-dimensional stationary random closed set
with realizations in $\mathcal{S}$, formula~(\ref{eq:S_Xi}) simplifies to
\[
S_X = -\pi \int_0^\pi C_X'(\orig, \varphi) \frac{\d \varphi}{\pi} = -\int_0^\pi
C_X'(\orig, \varphi) \d \varphi.
\]
%
The following result is a direct corollary of
Proposition~\ref{pro:C_derivative}, Theorem~\ref{the:S_Xi}, and Fubini's
theorem.
\begin{corollary}
  Let $\Xi$ be a stationary Poisson cylinder process with intensity $\lambda$,
  shape distribution $\theta$ and cylinders with regular  cross-section
  $K(Z) \in \mathcal{R}$ for $\theta$-almost all $Z \in \mathcal{Z}_k$ and
  finite specific surface area. Then, the specific surface area of $U_\Xi$ is
  given by the formula
  \begin{align*}
    \overline S_\Xi
    =&\ - \lambda \frac{\kappa_d d}{\kappa_{d-1}} \int_{\mathcal Z_k^o}
    \int_{G(1,d)} \gamma'_{K(Z)}(\orig, {\Pr}_{L(Z)}(\mathbf{r}_\xi ))
      \ [\xi, L(Z)] \, \d \xi \, \theta(\d Z) \times\\
    &\ \times \exp\left\{-\lambda \int_{\mathcal Z_k^o} A(Z) \, \theta(\d Z)\right\}.
  \end{align*}
\end{corollary}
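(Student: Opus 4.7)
The plan is to combine the three ingredients explicitly named in the corollary: the representation of the specific surface area via a directional integral of the covariance derivative (Theorem~\ref{the:S_Xi}), the explicit formula for $C'_{U_\Xi}(\orig, \mathbf{h})$ in Proposition~\ref{pro:C_derivative}, and Fubini's theorem to rearrange the iterated integrals.

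First I would verify that the hypotheses of Theorem~\ref{the:S_Xi} are satisfied by $X = U_\Xi$. Stationarity is immediate from the assumption on $\Xi$. A.s.\ regularity of $U_\Xi$ follows from the assumption that $K(Z)$ is regular for $\theta$-almost all $Z$: each cylinder $K(Z)\oplus L(Z)$ is then regular, and a (locally finite) union of regular sets from the extended convex ring is regular. The assumption that $\overline{S}_\Xi<\infty$ is built into the corollary, and realizations lie in $\mathcal{S}$ by construction of the cylinder process. Finite specific surface area combined with Lemma~\ref{lem:S_Xi_finite} (or just with the standing assumption of the paper) guarantees $\int_{\mathcal Z_k^o} S(K(Z))\,\theta(\d Z) < \infty$, which in turn is the hypothesis needed to invoke Proposition~\ref{pro:C_derivative}.

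Next I would apply Theorem~\ref{the:S_Xi} to obtain
\[
\overline S_\Xi = -\frac{d\kappa_d}{\kappa_{d-1}} \int_{G(1,d)} C'_{U_\Xi}(\orig, \mathbf{r}_\xi)\,\d\xi,
\]
and substitute the formula from Proposition~\ref{pro:C_derivative} with $\mathbf{h} = \mathbf{r}_\xi$, giving
\[
C'_{U_\Xi}(\orig, \mathbf{r}_\xi) = \lambda \exp\Bigl\{-\lambda \int_{\mathcal Z_k^o} A(Z)\,\theta(\d Z)\Bigr\}
\int_{\mathcal Z_k^o} \gamma'_{K(Z)}(\orig, {\Pr}_{L(Z)}(\mathbf{r}_\xi))\,[\mathbf{r}_\xi, L(Z)]\,\theta(\d Z).
\]
Plugging this in, pulling the (constant in $\xi$) exponential factor outside, and applying Fubini to swap the order of $\d\xi$ and $\theta(\d Z)$ yields exactly the claimed formula, using that $[\mathbf{r}_\xi, L(Z)] = [\xi, L(Z)]$ since $\mathbf{r}_\xi$ spans $\xi$.

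The main obstacle is justifying the use of Fubini's theorem, which requires an integrability estimate for the integrand $\gamma'_{K(Z)}(\orig,\Pr_{L(Z)}(\mathbf{r}_\xi))\,[\xi,L(Z)]$ over $G(1,d) \times \mathcal{Z}_k^o$. Because $|\gamma'_{K(Z)}(\orig,t)| \leq S(K(Z))$ uniformly in the unit direction $t$ and $[\xi, L(Z)] \leq 1$, this is dominated by $S(K(Z))$, so the integrability assumption $\int_{\mathcal Z_k^o} S(K(Z))\,\theta(\d Z) < \infty$ (which is in force) makes Fubini applicable. Beyond this, the argument is purely a bookkeeping substitution, so no further analytic work is needed.
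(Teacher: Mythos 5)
Your proof is correct and is precisely the route the paper takes: the paper offers no written argument beyond declaring the corollary ``a direct corollary of Proposition~\ref{pro:C_derivative}, Theorem~\ref{the:S_Xi}, and Fubini's theorem,'' and you carry out exactly that substitution, even supplying the domination bound $|\gamma'_{K(Z)}(\orig,t)|\le S(K(Z))$, $[\xi,L(Z)]\le 1$ that justifies Fubini. One small caveat: Lemma~\ref{lem:S_Xi_finite} gives the implication in the opposite direction (integrability of $S(K(Z))$ implies finite $\overline S_\Xi$), so $\int_{\mathcal Z_k^o} S(K(Z))\,\theta(\d Z)<\infty$ should be invoked as the paper's standing assumption rather than deduced from finiteness of the specific surface area.
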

%

%
\begin{example}%
  Assume that $K(Z)$ is convex and regular for $\theta$-almost all $Z \in
  \mathcal{Z}_k$.
\begin{itemize}
\item For arbitrary $d$, and $k = d-1$ it holds for $\d \xi$-a.e.\ line $\xi \in
  G(1,d)$ that
  \[
  \gamma_{K(Z)}'(\orig, {\Pr}_{L(Z)}(\mathbf{r}_\xi)) = -1
  \]
  and
  \[
  \int_{G(1,d)} [\xi, L(Z)] \, \d \xi
  = \int_{G(d - 1,d)} [\xi^\perp, L(Z)] \, \d \xi
  = \frac{(d+1) \kappa_{d+1} \kappa_1}{d \kappa_d 2 \kappa_2}
  = \frac{(d+1) \kappa_{d+1}}{d \kappa_d \pi},
  \]
  see \textup{\cite[Corollary~5.2]{Spod02_1}}.

  This yields
  \begin{align*}
    \overline S_\Xi
    &= \lambda \frac{(d+1) \kappa_{d+1}}{\pi \kappa_{d-1}}
    \exp\left\{-\lambda \int_{\mathcal{Z}_{d-1}^o} \nu_1^{L(Z)}(K(Z)) \, \theta(\d Z)\right\}\\
    &= 2 \lambda \exp\left\{-\lambda \int_{\mathcal{Z}_{d-1}^o} \nu_1^{L(Z)}(K(Z)) \, \theta(\d Z)\right\}.
  \end{align*}
\item For $d=3$, $k = 1$, $K=B_a(\orig)$ it can be calculated that
  $\gamma_{K(Z)}'(\orig, {\Pr}_{L(Z)}(\xi)) = -\pi a$, $\int_{G(1, 3)}
  [\xi, L(Z)] \, \d \xi = 1/ 2$ (see also \textup{\cite[p.~298]{skm}},
  or \textup{\cite[Corollary~5.2]{Spod02_1}}), and thus we have
\[\overline S_\Xi = 4 \lambda
  \frac 1 2 \pi a \e^{-\lambda \pi a^2} = 2 \pi a \lambda \e^{-\lambda \pi a^2},
\]
  which coincides with the case of isotropic cylinders, compare
  \textup{\cite[p.~64]{OhserMueckl00}}.
\end{itemize}
\end{example}

\section{Optimization Example}
\label{sec:optimization-example}

In this section we show how the formulae from Sections~\ref{sectCapacity}
and~\ref{sec:spec-surf-area} can be applied to solve an optimization problem for
cylinder processes.

The following problem originates from the fuel cell research. The gas diffusion
layer of a polymer electrolyte membrane fuel cell is a porous material made of
polymer fibers (see Figure~\ref{figToray}) which can be modeled well by an
anisotropic Poisson process of cylinders in $\R^3$. In a gas diffusion layer,
the volume fraction of the polymer material lies between 70 and 80 percent, and
the directional distribution of fibers is concentrated on a small neighborhood
of a great circle of a unit sphere $\mathcal{S}^2$, i.e.\ all fibers are almost
horizontal. In order to optimize the water and gas transport properties, it is
desirable to have a relatively small variation of the size of pores in the
medium, where we define a pore at a point $x$ in the complement of $U_\Xi$ as
the maximal ball with center in $x$ which does not hit $U_\Xi$.

We investigate the following mathematical simplification of this problem, which
can be solved analytically in some particular cases.

For a fixed intensity $\lambda$ of the Poisson cylinder process $\Xi$, find a
shape distribution of cylinders $\theta$ which maximizes the volume fraction $p$
of $U_\Xi$ provided that the variance of the typical pore radius $H$ is
small. In other words, solve the optimization problem
\begin{align}\label{eq:optimization-problem}
  \begin{cases}
    p \rightarrow \max_\theta,\\
    \var H < \varepsilon,
  \end{cases}
\end{align}
where $H$ is a random variable with distribution function
$H_{B_1(\orig)}(r)$.

As it will be clear later, the condition on the directional distribution
$\alpha$ of fibers that all fibers are almost horizontal can be
neglected since the directional component of the shape distribution $\theta$ has
no influence on the solution.

To simplify the notation, let $c_s = \int_{\mathcal{Z}_1^o} S(K(Z)) \theta(\d Z)$
and $\Phi(x)$ be the distribution function of a standard normally distributed
random variable.

First we take a look at the moments of the pore radius $H$ (assuming that $r \ge
0$), remembering that $H_{B_1(\orig)}(r) = 1 - \e^{-\lambda (r c_s + r^2 \pi)}$ (as shown in an
example in Section~\ref{sectCapacity.subsectContact}) and thus the density of $H$ is equal to $\frac{\d }{\d  r}H_{B_1(\orig)}(r) = \lambda
(c_s + 2 \pi r) \e^{-\lambda (r c_s + r^2 \pi)}$. It holds
\begin{align*}
  \E H &= \int_0^\infty r \lambda (c_s + 2 \pi r) \exp \left(-\pi \lambda \left(r + \frac{c_s}{2 \pi}\right)^2 \right) \exp \left(\frac{c_s^2 \lambda}{4 \pi}\right) \,\d r\\
  &= \exp \left(\frac{c_s^2 \lambda}{4 \pi}\right) \lambda
    \int_\frac{c_s}{2 \pi}^\infty \left(r - \frac{c_s}{2 \pi}\right) \left(2 \pi r\right) \e^{-\pi \lambda r^2} \,\d r\\
  &= \exp \left(\frac{c_s^2 \lambda}{4 \pi}\right) \frac{1}{\sqrt{\lambda}}
 \left(1 - \Phi\left(c_s \sqrt{\tfrac{\lambda}{2 \pi}}\right) \right).
\end{align*}
Furthermore it can be calculated that
\begin{align*}
  \E H^2
  &= \exp \left(\frac{c_s^2 \lambda}{4 \pi}\right) \lambda \int_0^\infty r^2 (c_s + 2 \pi r) \exp \left(-\pi \lambda \left(r + \frac{c_s}{2 \pi}\right)^2 \right) \,\d r\\
  &= \frac{1}{\pi \lambda}
    - \exp \left(\frac{c_s^2 \lambda}{4 \pi}\right) \frac{c_s}{\pi \sqrt{\lambda}}\left(1 - \Phi\left(c_s \sqrt{\tfrac{\lambda}{2 \pi}}\right) \right).
\end{align*}
Defining $c_e = \exp \left(\frac{c_s^2 \lambda}{4 \pi}\right)$ and
$c_\Phi = \left(1 - \Phi\left(c_s \sqrt\frac{\lambda}{2 \pi}\right)\right)$,
this leads to
\begin{align*}
  \E H^2 - (E H)^2
  &= \frac{1}{\pi \lambda} - \frac{c_e c_\Phi c_s}{\pi \sqrt{\lambda}}
    - \frac{c_e^2 c_\Phi^2}{\lambda} \leq \varepsilon,
\end{align*}
multiplication with $\pi \lambda$ yields the equivalent condition
\begin{align*}
  1 - \sqrt{\lambda} c_e c_\Phi c_s - \pi c_e^2 c_\Phi^2 \leq \varepsilon \pi \lambda,
\end{align*}
which holds if and only if
\begin{align*}
  \left(c_e c_\Phi + \frac{\sqrt{\lambda} c_s}{2 \pi}\right)^2
    - \frac{\lambda c_s^2}{4 \pi^2}+ (\varepsilon \lambda - 1/\pi) \geq 0.
\end{align*}
This is always fulfilled if $\varepsilon \geq \frac{1}{\pi\lambda}$ and
$\frac{\lambda c_s^2}{4 \pi^2} - (\varepsilon \lambda - 1/\pi) \leq 0$ or, equivalently,
$c_s \leq 2 \pi \sqrt{\varepsilon - \frac{1}{\pi\lambda}}$.

In the following we always assume that $\varepsilon \geq \frac{1}{\pi\lambda}$
and replace the condition $\var H < \varepsilon$ by a stronger sufficient
condition
\begin{align}\label{eq:cond_S}
  c_s = \int_{\mathcal{Z}_1^o} S(K(Z)) \theta(\d Z) \leq 2\pi
    \sqrt{\varepsilon - \tfrac{1}{\pi\lambda}}.
\end{align}

Hence, \eqref{eq:optimization-problem} is reduced to the optimization problem
\begin{align}\label{eq:optimization-problem-new}
  \begin{cases}
    \int_{\mathcal{Z}_1^o} A(Z) \theta(\d Z) \rightarrow \max_\theta,\\
    \int_{\mathcal{Z}_1^o} S(K(Z)) \theta(\d Z) \le 2\pi
      \sqrt{\varepsilon - \tfrac{1}{\pi\lambda}}.
  \end{cases}
\end{align}

The solution of the optimization problem~\eqref{eq:optimization-problem-new}
yields cylinders with $\theta$-a.s.\ circular base. Notice that this solution
does not depend on the directional distribution component $\alpha$ of $\theta$.
Indeed, cylinders $Z$ can be replaced by cylinders $Z'$ which have the same
direction space and surface area ($S(K(Z)) = S(K(Z'))$) but are circular. Then the isoperimetric
inequality yields $A(Z') \ge A(Z)$. Thus, it holds that
\begin{align*}
  \int_{\mathcal{Z}_1^o} S(K(Z)) \theta(\d Z) = \int_{\mathcal{Z}_1^o} S(K(Z')) \theta(\d Z)
\end{align*}
and
\begin{align*}
  \int_{\mathcal{Z}_1^o} A(Z) \theta(\d Z) \le \int_{\mathcal{Z}_1^o} A(Z') \theta(\d Z),
\end{align*}
which means that the circular version is at least not worse than the original
version.

Thus, we assume that the cylinders are $\theta$-a.s.\ circular and denote the
radius of a cylinder $Z$ by $R(Z)$. It follows from condition (\ref{eq:cond_S})
that
\begin{align*}
  \int_{\mathcal{Z}_1^o} S(K(Z)) \theta(\d Z)
  = 2 \pi \int_{\mathcal{Z}_1^o} R(Z) \theta(\d Z)
  \le 2\pi \sqrt{\varepsilon - \tfrac{1}{\pi\lambda}},
\end{align*}
i.e.\ the new condition is that the expectation of the radius of a typical
cylinder is less or equal than $\sqrt{\varepsilon - \frac{1}{\pi\lambda}}$.

Furthermore, it follows from \eqref{eq:optimization-problem-new} that maximizing
$p$ is equivalent to maximizing $\int_{\mathcal Z_1^o} R(Z)^2 \, \theta(\d Z)$.

The above calculation shows that the volume fraction of $70\% - 80\%$ in the
optimized gas diffusion layer of a fuel cell can be achieved best by taking
fibers with circular cross sections, relatively small mean radius and high
variance of this radius.

\begin{figure}
  {%
  \centering
  \includegraphics[width=10cm]{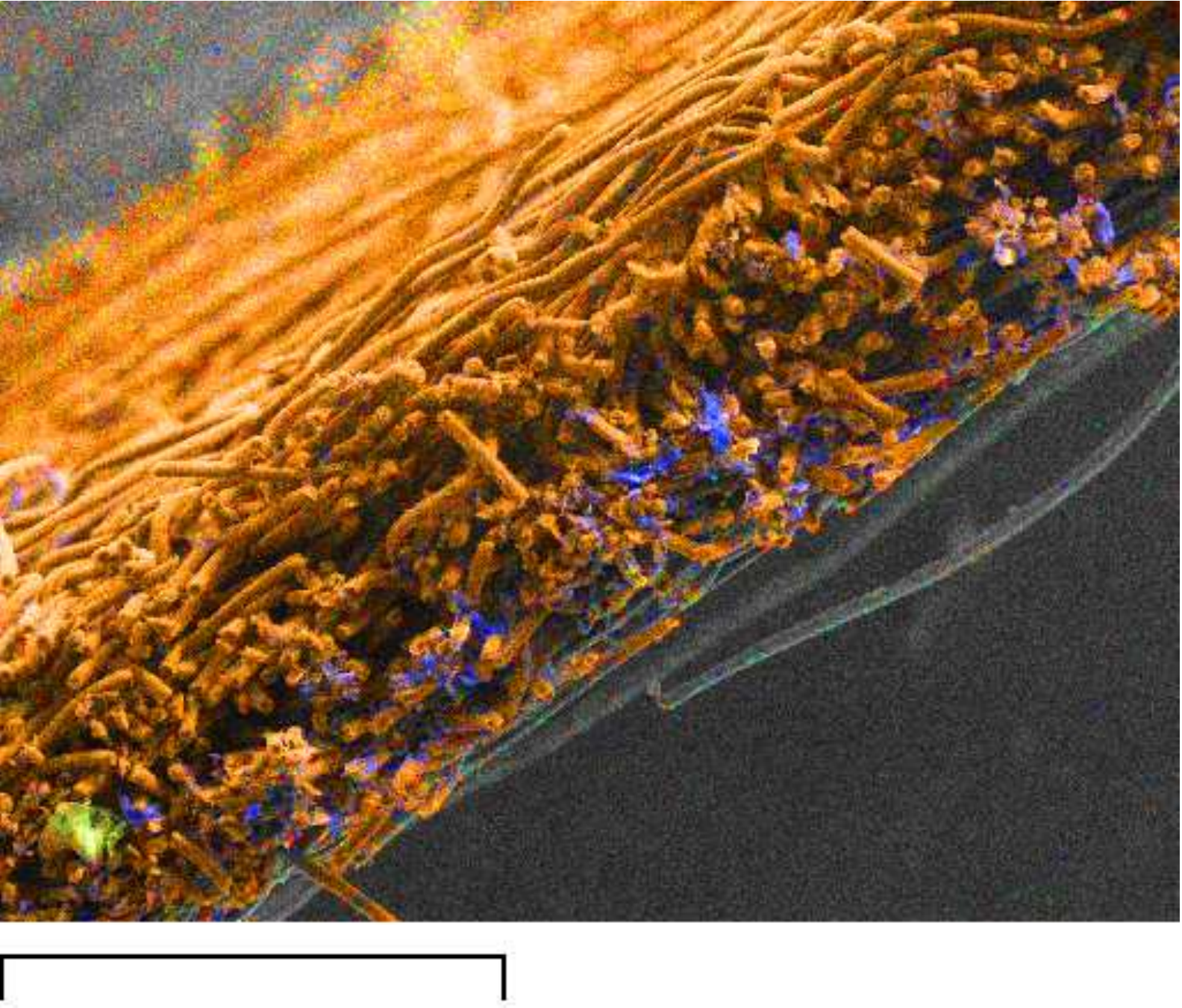}\\
  }
  \vspace{-.7cm}
  \hspace*{3cm} \huge 300 $\mu$m
  \caption{Microscopic picture of the gas diffusion layer}\label{figGDLside}
\end{figure}

Figure~\ref{figGDLside} shows that cross sections of fibers of gas diffusion
layers are almost circular. There are also gas diffusion layers with a little
variance in the fiber radii, although they are mostly nearly constant. Anyhow
the variance of the fiber radii is of course limited, since it is impossible to
produce fibers with an arbitrarily large radius.

We have to remark that from a practical point of view the optimization
problem~(\ref{eq:optimization-problem}) is not well posed. For the construction
of gas diffusion layers, mainly the intensity of the fibers $\lambda$ can be
varied. Hence a practically relevant optimization should involve maximizing the
volume fraction $p$ with respect to $\lambda$ as well. Since the latter problem
is much more involved than the one discussed here, it would go beyond the scope
of this paper.
\\[2ex]
\textbf{Acknowledgements} We would like to thank Werner Nagel, Rolf Schneider, Dietrich Stoyan, Wolfgang
  Weil, and anonymous referees for their useful comments which helped us to
  improve the paper. Furthermore, we are indebted to Christoph Hartnig and
  Werner Lehnert for discussions about fuel cells and for providing
  Figures~\ref{figToray} and~\ref{figGDLside}.

\bibliographystyle{spbasic}      


\begin{thebibliography}{21}
\providecommand{\natexlab}[1]{#1}
\providecommand{\url}[1]{{#1}}
\providecommand{\urlprefix}{URL }
\expandafter\ifx\csname urlstyle\endcsname\relax
  \providecommand{\doi}[1]{DOI~\discretionary{}{}{}#1}\else
  \providecommand{\doi}{DOI~\discretionary{}{}{}\begingroup
  \urlstyle{rm}\Url}\fi
\providecommand{\eprint}[2][]{\url{#2}}

\bibitem[{Ambartzumian(1990)}]{amb90}
Ambartzumian RV (1990) Factorization Calculus and Geometric Probability,
  Encyclopedia of Mathematics and Its Applications, vol~33. Cambridge
  University Press, Cambridge

\bibitem[{Berryman(1987)}]{berryman87}
Berryman JG (1987) Relationship between specific surface area and spatial
  correlation functions for anisotropic porous media. J Math Phys 28:244--245

\bibitem[{Corte and Kallmes(1962)}]{CorKal61}
Corte H, Kallmes O (1962) Formation and structure of paper: Statistical
  geometry of a fiber network. Transactions 2nd Fundamental Research Symposium
  1961, Oxford pp 13--46

\bibitem[{Davy(1978)}]{davy78dis}
Davy P (1978) Stereology --- a statistical viewpoint. PhD thesis, Australian
  National University, Canberra

\bibitem[{Helfen et~al(2003)Helfen, Baumbach, Schladitz, and Ohser}]{hbso03}
Helfen L, Baumbach T, Schladitz K, Ohser J (2003) Determination of structural
  properties of light materials by three--dimensional synchrotron--radiation
  imaging and image analysis. G I T Imaging \& Microscopy 4:55--57

\bibitem[{Hoffmann(2009)}]{hoffm07}
Hoffmann LM (2009) {M}ixed {M}easures of {C}onvex {C}ylinders and {Q}uermass
  {D}ensities of {B}oolean {M}odels. Acta Appl Math 105(2):141--156

\bibitem[{K{\"o}nig and Schmidt(1991)}]{kon:s}
K{\"o}nig D, Schmidt V (1991) Zuf{\"a}llige {Punktprozesse}. Teubner, Stuttgart

\bibitem[{Manke et~al(2007)Manke, Hartnig, Gr{\"u}nerbel, Lehnert, Kardjilov,
  Haibel, Hilger, and Banhart}]{mhglkhhb}
Manke I, Hartnig C, Gr{\"u}nerbel M, Lehnert W, Kardjilov N, Haibel A, Hilger
  A, Banhart J (2007) Investigation of water evolution and transport in fuel
  cells with high resolution synchrotron {X}-ray radiography. Applied Physics
  Letters 90:174,105

\bibitem[{Matheron(1975)}]{ma}
Matheron G (1975) Random Sets and Integral Geometry. J. Wiley \&\ Sons, New
  York

\bibitem[{Mathias et~al(2003)Mathias, Roth, Fleming, and Lehnert}]{maroflle}
Mathias M, Roth J, Fleming J, Lehnert W (2003) Diffusion media materials and
  characterisation. Handbook of Fuel Cells III

\bibitem[{Molchanov et~al(1993)Molchanov, Stoyan, and Fyodorov}]{mo:st93}
Molchanov IS, Stoyan D, Fyodorov KM (1993) Directional analysis of planar fibre
  networks: Application to cardboard microstructure. J Microscopy 172:257--261

\bibitem[{Mukherjee and Wang(2006)}]{MukWan06}
Mukherjee PP, Wang CY (2006) Stochastic microstructure reconstruction and
  direct numerical simulation of the {PEFC} catalyst layer. Journal of the
  Electrochemical Society 153(5):A840--A849

\bibitem[{Ohser and M{\"u}cklich(2000)}]{OhserMueckl00}
Ohser J, M{\"u}cklich F (2000) Statistical Analysis of Microstructures in
  Materials Science. J. Wiley \&\ Sons, Chichester

\bibitem[{Schladitz et~al(2006)Schladitz, Peters, Reinel-Bitzer, Wiegmann, and
  Ohser}]{sprwo06}
Schladitz K, Peters S, Reinel-Bitzer D, Wiegmann A, Ohser J (2006) Design of
  acoustic trim based on geometric modelling and flow simulation for
  non--woven. Computational Materials Science 38(1):56--66

\bibitem[{Schneider(1987)}]{schn87}
Schneider R (1987) Geometric inequalities for {P}oisson processes of convex
  bodies and cylinders. Results in Mathematics 11:165--185

\bibitem[{Schneider(1993)}]{schn}
Schneider R (1993) Convex Bodies. The {Brunn--Minkowski} Theory. Cambridge
  University Press, Cambridge

\bibitem[{Schneider and Weil(2008)}]{SchneiWeil08}
Schneider R, Weil W (2008) Stochastic and {I}ntegral {G}eometry. Probability
  and {I}ts {A}pplications, Springer

\bibitem[{Serra(1982)}]{se82}
Serra J (1982) Image Analysis and Mathematical Morphology. Academic Press,
  London

\bibitem[{Spodarev(2002)}]{Spod02_1}
Spodarev E (2002) {C}auchy--{K}ubota--type integral formulae for the
  generalized cosine transforms. Izv Akad Nauk Armen, Mat [J Contemp Math Anal,
  Armen Acad Sci] 37(1):47--64

\bibitem[{Stoyan et~al(1995)Stoyan, Kendall, and Mecke}]{skm}
Stoyan D, Kendall WS, Mecke J (1995) Stochastic Geometry and its Applications,
  2nd edn. J. Wiley \&\ Sons, Chichester

\bibitem[{Weil(1987)}]{w87}
Weil W (1987) Point processes of cylinders, particles and flats. Acta Appl Math
  9:103--136

\end{thebibliography}
\end{document}